\documentclass[12pt,onecolumn,draftcls]{IEEEtran}

\usepackage[usenames,dvipsnames]{xcolor}
\usepackage{setspace}
\usepackage[draft]{hyperref}
\usepackage{cite}
\usepackage{amsmath,amssymb,euscript,yfonts,psfrag,latexsym,dsfont,graphicx,bbm,color,amstext,wasysym,balance,mathtools}
\usepackage{algorithm,algpseudocode}
\usepackage{graphicx}
\usepackage{amsthm}
\usepackage{float,subfig}

 \usepackage{multirow}

\makeatletter
\def\footnoterule{\relax%
  \kern-5pt
  \hbox to \columnwidth{\hfill\vrule width 0.5\columnwidth height 0.4pt\hfill}
  \kern4.6pt}
\makeatother


\graphicspath{{../figures/}{figures/}{./figures/}}

\newcommand{\mR}{{\mathbb R}}

\newtheorem{lemma}{Lemma}
\newtheorem{cor}{Corollary}
\newtheorem{prop}{Proposition}

\newtheorem{thm}{Theorem}

\newcommand{\bv}{{\boldsymbol v}}

\newcommand{\bx}{{\boldsymbol x}}

\newcommand{\by}{{\boldsymbol y}}

\newcommand{\btheta}{{\boldsymbol \theta}}

\newcommand{\mS}{{\mathbb S}}
\newcommand{\ma}{{\rm a}}
\newcommand{\ms}{{\rm s}}

\newcommand{\cSym}{{S_{ym}}}

\newcommand{\cE}{{\mathcal E}}

\newcommand{\cP}{{\mathcal P}}

\newcommand{\dd}{{\operatorname{d}}}
\newcommand{\g}{{\operatorname{g}}}
\newcommand{\f}{{\operatorname{f}}}
\newcommand{\w}{{\operatorname{w}}}
\newcommand{\trace}{{\operatorname{tr}}}

\newcommand{\argmin}{{\operatorname{argmin}}}

\begin{document}

\title{Regularization of covariance matrices on Riemannian manifolds using linear systems}
\author{Lipeng Ning \thanks{L. Ning is with the Department of Psychiatry, Brigham and Women's Hospital, Harvard Medical School. Email: lning@bwh.harvard.edu} }
	
\maketitle
\date{}

\begin{abstract}
We propose an approach to use the state covariance of linear systems to track time-varying covariance matrices of non-stationary time series. 
Following concepts from Riemmanian geometry, we investigate three types of covariance paths obtained by using different quadratic regularizations of system matrices.
The first quadratic form induces the geodesics based on the Bures-Wasserstein metric from optimal mass transport theory and quantum mechanics. 
The second type of quadratic form leads to the geodesics based on the Fisher-Rao metric from information geometry. 
In the process, we introduce a fluid-mechanics interpretation of the Fisher-Rao metric for multivariate Gaussian distributions. 
A main contribution of this work is the introduction of the third type of covariance paths which are steered by linear system matrices with rotating eigenspace. 
We provide theoretical results on the existence and uniqueness of this type of covariance paths. 
The three types of covariance paths are compared using two examples with synthetic data and real data based on functional magnetic resonance imaging, respectively. 
\end{abstract}

\begin{IEEEkeywords}
System identification; Riemmanian metric; optimal mass transport; information theory; optimal control; brain networks; functional MRI
\end{IEEEkeywords}

\IEEEpeerreviewmaketitle

\section{Introduction}
The problem of tracking changes and deformations of positive definite matrices is relevant to a wide spectrum of scientific applications, including computer vision, sensor array, and diffusion tensor imaging, see e.g. \cite{PorikliCVPR,WuTIP2012,Yang1988,JNG2012,Lenglet2006,Dryden2009,Hao2011}. 
A key motivation behind the present work is from a neuroscience application on understanding functional brain connectivity using resting-state function MRI (rsfMRI) data.
Specifically, rsfMRI is a widely used neuroimaging modality which acquires a sequence three-dimensional image volumes from the whole brain to understand brain functions and activities \cite{Biswal1995}.
The standard approach for analyzing the interdependence between brain regions is to compute the correlation coefficient between the underlying rsfMRI time-series data.
Typically, the whole-brain functional network is characterized by the covariance matrix of a multivariate time-series data obtained from different brain regions \cite{Buckner2013,Smith2013}. 
It has been recently observed that the functional connectivity fluctuates over time \cite{CHANG201081,PRETI201741}, implying that the static covariance matrix based on the assumption of stationary time series may be too simplistic to capture the full extent of brain activities. 
Thus there is an urgent need for new computational tools for understanding dynamic functional brain networks using non-stationary rsfMRI data. 

The aim of this work is use control-theoretic approaches to develop models for time-varying covariance matrices. In particular, we consider a non-stationary, discrete-time random process as observations of a zero-mean time-dependent random variable $\bx_t\in \mR^n$. 
We assume that the temporal change of the probability distributions $p_t(\bx)$ of $\bx_t$ is much slower than the rate of measurements. 
Therefore, the instantaneous covariance matrix
\[
P_t:=\cE_{p_t}(\bx \bx')=\int_{\mR^n} \bx \bx' p_t(\bx) d\bx,
\]
can be estimated by using sample covariance matrices computed from measurements from short time windows.
Assume that two covariance matrices $P_0$ and $P_1$ at $t=0, 1$ are known. 
Then geodesics connecting $P_0$ and $P_1$ on the manifold of positive-definite matrices provide natural structures to model time-varying covariance matrices $P_t$ on the time interval $t\in[0, 1]$.
As generalizations of straight lines in Euclidean space, geodesics are paths of shortest distances connecting the start to the finish on a curved manifold.   
The path length is measured by Riemmanian metrics which are quadratic forms of the tangent matrix $\dot P_t$. 
Several Riemannian metrics have been proposed to derive geodesics for covariances matrices. 
For instance, the geodesics based on the Fisher-Rao metric for Gaussian distributions \cite{Rao1945,Amari2000,Cencov1982,Kass1997} from the theory of information geometry is given by
\begin{align}\label{eq:Ptinfo}
P_{t}^{\rm info}=P_0^{1/2}(P_0^{-1/2} P_1P_0^{-1/2} )^t P_0^{1/2}.
\end{align} 
An other example would be the Wasserstein-2 metric for Gaussian probability density functions \cite{VillaniBook,RachevBook,Knott1984,Takatsu2008}, which induces the following geodesic
\begin{align}\label{eq:Ptomt}
P^{\rm omt}_{t}=\big((1-t)P_0^{\tfrac12}+tP_1^{\tfrac12} \hat U\big)\big((1-t)P_0^{\tfrac12}+tP_1^{\tfrac12} \hat U\big)',
\end{align}
where $P_t^{\tfrac12}$ denotes the unique positive-definite square root of $P_t$, and 
\begin{align*}
\hat U=P_1^{-1/2}P_0^{-1/2} (P_0^{1/2}P_1P_0^{1/2})^{1/2}
\end{align*} 
is an orthogonal matrix.
These Riemmanian metrics will be explained in more details in the following sections.

In this paper, we combine concepts from Riemmanian geometry and linear systems to develop smooth covariance paths. Specifically, we consider a geodesic $P_t$ as the state covariance of the following linear system 
\begin{align}\label{eq:Ax}
\dot \bx_t=A_t \bx_t,
\end{align}
with $A_t\in \mR^{n\times n}$. We analyze the linear systems that steer $P_t$ along different geodesics.
Based on \eqref{eq:Ax}, the state covariance evolves according to
\begin{align}\label{eq:dotP}
\dot P_t=A_tP_t+P_tA_t'.
\end{align}
Thus, the matrix $A_t$ can be considered as a non-commutative devision of $\dot P_t$ by $P_t$ scaled by a factor of $\tfrac12$.
Clearly, if $P_t$ is given, the mapping from $A_t$ to $\dot P_t$ induced by \eqref{eq:dotP} is injective. 
In this paper, we consider the $A_t$ that the minimizer of a quadratic function $f(A_t)$ such that \eqref{eq:dotP} holds.
The optimal value $f(A_t)$ provides an alternative way to define Riemannian metrics for measuring the length of covariance paths.
To this end, we consider covariance paths that are the solutions to
\begin{align}\label{eq:minA}
\min_{P_t, A_t} \left\{\int_0^1 \f(A_t)dt \mid \dot P_t=A_t P_t+P_tA_t', P_0, P_1 \mbox { given}\right\}.
\end{align}
Note that the optimal system matrix $A_t$ may not be symmetric, which could provide useful information to understand directed interactions among the underlying variables.
In this work, we investigate the solution to \eqref{eq:minA} with three different types of quadratic forms of $f(A_t)$. 
The first two quadratic forms lead to the geodesic paths $P_t^{\rm omt}$ and $P_t^{\rm info}$, respectively.
A main contributions of this work in the introduction of the third family of geodesics which are the state covariances of linear system with rotating eigespace.

This paper is organized as follows. 
In Section \ref{sec:omt}, we revisit the OMT-based geodesics $P_{t}^{\rm omt}$ and introduce the corresponding quadratic form $f(A_t)$.
Section \ref{sec:info} will focus on the quadratic forms that lead to the Fisher-Rao-based geodesics $P_{t}^{\rm info}$. 
We also introduce a fluid-mechanics interpretation of the Fisher-Rao metric, which provides a point of contact between OMT and information geometry.
In Section \ref{sec:WLS}, we investigate the optimal solutions to \eqref{eq:minA} corresponding to a family of quadratic functions $f(A_t)$ which are weighted-square norms of the symmetric and asymmetric part of $A_t$. 
We provide the expression of the optimal solutions and analysis on the existence and uniqueness of the solutions.
In Section \ref{sec:example}, we compare the three types of covariances paths using two examples based on synthetic data and real data from rsfMRI, respectively.
Section \ref{sec:discussion} includes the discussions and conclusions.

For notations, $\mS^{n}, \mS^{n}_{+}, \mS^{n}_{++}$ denote the set of symmetric, positive semidefinite, and positive definite matrices of size $n\times n$, respectively. 
Small boldface letters, e.g. $\bx, \bv$, represent column vectors. Capital letters, e.g. $P, A$, denote matrices. Regular small letters, e.g. $\w, h$ are for scalars or scalar-valued functions.

\section{Mass-transport based covariance paths}\label{sec:omt}
\subsection{On optimal mass transport}
Let $p_0(\bx)$ and $p_1(\bx)$ denote two probability density functions on $\mR^n$. The Wasserstein-2 metric between the two, denoted by $\w_2(p_0, p_1)$, is defined by
\begin{align*}
\w_2(p_0, p_1)^2=&\inf_{m(\bx,\by)\geq0}  \int_{\mR^n\times \mR^n} \|\bx-\by\|_2^2 m(\bx,\by)d\bx d\by,   \nonumber\\
\mbox{s.t. }&  \int_{\mR^n} m(\bx,\by)d\bx=p_1(\by), \int_{\mR^n} m(\bx,\by)d\by=p_0(\bx),
\end{align*}
where $m(\bx,\by)$ represents a probability density function on the joint space $\mR^n\times \mR^n$ with the marginals specified by $p_0$ and $p_1$, \cite{VillaniBook,RachevBook}. A fluid-mechanics interpretation of $\w_2(p_0, p_1)^2$ was introduced in \cite{JKO1998,Benamou2000}, which provided a Riemmanian structure of the manifold of probability density functions. 
To introduce this formula, we consider the following continuity equation 
\begin{align}\label{eq:continuity}
\frac{\partial p_t(\bx)}{\partial t}+\nabla_\bx\cdot(p_t(\bx) \bv_t(\bx))=0,
\end{align}
where $\bv_t(\bx)$ represents a time-varying velocity field.
Then, $\w_2(p_0, p_1)^2$ is equal to \cite{Benamou2000}
\begin{align}\label{eq:Benamou}
\w_2(p_0, p_1)^2=\inf_{p_t,\bv_t} \bigg\{\int_0^1 \cE_{p_t}(\|\bv_t(\bx)\|_2^2)dt \mid&  \dot p_t+\nabla_\bx\cdot(p_t \bv_t)=0 \bigg\}.
\end{align}
The optimal solution of $p_t(\bx)$ is the geodesic on the manifold of probability density functions that connects the endpoints $p_0(\bx)$ and $p_1(\bx)$.

\subsection{The Bures-Wasserstein metric}
In the special case when $p_0(\bx)$ and $p_1(\bx)$ are zero-mean Gaussian probability density functions with
\[
p_i(\bx)=\det(2\pi P_i)^{-\frac12}e^{(-\tfrac12 \bx'P_i^{-1} \bx)}, \mbox{ for } i=0, 1,
\] 
the corresponding geodesic $p_t(\bx)$ at any fixed time $t$ is also zero-mean Gaussian with the corresponding covariance matrix given by $P_t^{\rm omt}$ \cite{Knott1984,Takatsu2008}. 
The geodesic distance is equal to Wasserstein-2 metric $\w_2(p_0, p_1)$, which also induces the following distance measure on the covariance matrices
\begin{align}
\w_2(p_0,p_1)=\dd_{{\rm w}_2}(P_0, P_1):=\|P_0^{\tfrac12}-P_1^{\tfrac12}\hat U\|_{\rm F},
\end{align}
where $\|\cdot\|_{\rm F}$ denotes the Frobenius norm of a matrix.

The covariance path $P_t^{\rm omt}$ is also equal to the geodesic induced by the Bures metric from quantum mechanics \cite{Ning2013,Bhatia2017} on the manifold of positive definite matrices.
In particular, let $P\in \mS^{n}_{++}$ and $\Delta\in \cSym_n$ which represents a tangent vector at $P$. 
The Bures metric takes the form
\[
\g_{P,{\rm Bures}}(\Delta)=\trace(\Delta M),
\] 
where $M\in \mS^{n}$ and $\frac12(PM+MP)=\Delta$,
see e.g. \cite{Uhlmann1992,Petz1994}.
The trajectory $P_t^{\rm omt}$ in \eqref{eq:Ptomt} is the shortest path connecting $P_0$ and $P_1$.
Thus, it satisfies that 
\begin{align}\label{eq:PtBures}
P_t^{\rm omt}=\underset{P_t}{\argmin} \int_0^1 \g_{P_t,{\rm Bures}}(\dot P_t)dt ,
\end{align}
with a given pair of endpoints $P_0, P_1\in \mS^n_{++}$, see e.g. \cite{Ning2013}.
The geodesic distance, also known as the Bures distance, is equal to $\dd_{{\rm w}_2}(P_0, P_1)$.

The Bures metric was originally proposed in quantum mechanics to compare density matrices, which are positive definite matrices whose traces are equal to one. 
The density matrices are non-commutative analogues of probability vectors. 
In the commutative case when $P$ is restricted to be a diagonal matrix whose diagonal entries consist of a probability vector, 
then the Bures metric $\g_{P,{\rm Bures}}(\Delta)$ is equal the Fisher information metric which will be discussed in Section \ref{sec:info}.

\subsection{Bures-Wasserstein-based linear systems}
Here, we present an alternative expression of \eqref{eq:PtBures} using the linear system in \eqref{eq:Ax}. For this purpose, we define that
\begin{align}\label{eq:fPt}
\f_{P_t}(A_t):=\trace(A_tP_tA_t'),
\end{align}
which is equal to $\cE_{p_t}(\|\dot\bx_t\|^2)$ if $\dot \bx_t$ is given by \eqref{eq:Ax}. The following proposition relates the geodesics $P_t^{\rm omt}$ to a linear system.
\begin{prop}\label{thm:transp}
Given $P_0, P_1\in \mS^{n}_{++}$. Then $P_{t}^{\rm omt}$ given by \eqref{eq:Ptomt} is the unique minimizer of
\begin{align}\label{eq:minA_transp}
\underset{P_t,A_t}{\min} \left\{ \int_0^1 \f_{P_t}(A_t)dt \mid \dot P_t=A_t P_t+P_tA_t', P_0, P_1 \mbox{ specified}\right\},
\end{align}
with $\f_{P_t}(A_t)$ defined by \eqref{eq:fPt} 
and the optimal $A_t$ is equal to
\begin{align}\label{eq:Aomt}
A_{t}^{\rm omt}= Q(tQ-I)^{-1},
\end{align}
where 
$Q=I-P_0^{-\tfrac12}(P_0^{\tfrac12}P_1P_0^{\tfrac12})^{\tfrac12} P_0^{-\tfrac12}$.
The optimal value of the objective function in \eqref{eq:minA_transp} is equal to $\dd_{{\rm w}_2}(P_0, P_1)^2$.
\end{prop}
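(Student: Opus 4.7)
The plan is to decouple the joint minimization in \eqref{eq:minA_transp} into an inner pointwise minimization over $A_t$ (with $P_t$ held fixed) and an outer minimization over $P_t$, and then to connect the resulting inner value to the Bures metric so that \eqref{eq:PtBures} delivers the outer minimizer.

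First, for the inner problem: with $P_t\succ 0$ and symmetric $\dot P_t$ fixed, minimize $\trace(A_tP_tA_t')$ subject to $A_tP_t+P_tA_t'=\dot P_t$. This is a strictly convex quadratic program with an affine constraint, so the minimizer is unique. Using a symmetric Lagrange multiplier $\Lambda_t$ (legitimate because the constraint is symmetric-valued), the first-order condition $2A_tP_t-2\Lambda_tP_t=0$ collapses to $A_t=\Lambda_t$; hence the optimal $A_t$ is symmetric and equals the unique symmetric solution of the Lyapunov equation $A_tP_t+P_tA_t=\dot P_t$. Multiplying this identity by $A_t$ and using cyclicity of the trace gives $\trace(A_tP_tA_t)=\tfrac12\trace(A_t\dot P_t)$.

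Next I would tie the pointwise inner minimum to the Bures metric. The matrix $M_t$ appearing in $\g_{P_t,{\rm Bures}}(\dot P_t)=\trace(\dot P_t M_t)$ solves $P_tM_t+M_tP_t=2\dot P_t$; by uniqueness of the symmetric Lyapunov solution, $M_t=2A_t$, so the pointwise inner minimum equals $\tfrac14\,\g_{P_t,{\rm Bures}}(\dot P_t)$. The outer problem then reduces to minimizing $\tfrac14\int_0^1 \g_{P_t,{\rm Bures}}(\dot P_t)\,dt$ over curves with the prescribed endpoints, whose unique minimizer is $P_t^{\rm omt}$ by \eqref{eq:PtBures}. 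Combined with the strict convexity of the inner problem, this yields both existence and uniqueness of the joint optimizer $(P_t^{\rm omt},A_t^{\rm omt})$.

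Finally, to recover \eqref{eq:Aomt} and evaluate the optimal objective, I would rewrite $P_t^{\rm omt}=(I-tQ)P_0(I-tQ)$, with $Q=I-P_0^{-1/2}(P_0^{1/2}P_1P_0^{1/2})^{1/2}P_0^{-1/2}$, obtained by a direct manipulation of \eqref{eq:Ptomt}. Substituting into the Lyapunov equation verifies that the unique symmetric solution is $A_t^{\rm omt}=-Q(I-tQ)^{-1}=Q(tQ-I)^{-1}$. Because $Q$ commutes with $(I-tQ)^{\pm 1}$, the two inner factors cancel in $A_t^{\rm omt}P_t^{\rm omt}(A_t^{\rm omt})'$, leaving $QP_0Q$, which is independent of $t$. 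The optimal value is therefore $\trace(QP_0Q)=\|P_0^{1/2}-P_1^{1/2}\hat U\|_{\rm F}^2=\dd_{{\rm w}_2}(P_0,P_1)^2$.

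The delicate step is the Lagrangian calculation in the inner problem: one must verify that a symmetric multiplier is without loss of generality and that the first-order condition really does force the optimal $A_t$ to be symmetric. Without this reduction, one cannot identify the inner value with a scalar multiple of $\g_{P_t,{\rm Bures}}(\dot P_t)$ and hence cannot close the outer minimization by invoking \eqref{eq:PtBures}.
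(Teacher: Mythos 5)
Your proof is correct, but it takes a genuinely different route from the paper's. The paper obtains the lower bound from the Benamou--Brenier formulation \eqref{eq:Benamou}: problem \eqref{eq:minA_transp} is the fluid-dynamic OMT problem restricted to linear velocity fields $\bv_t(\bx)=A_t\bx$, so $\dd_{{\rm w}_2}(P_0,P_1)^2$ bounds it from below, and the proof is completed by checking that the candidate pair $(P_t^{\rm omt},A_t^{\rm omt})$ is feasible and attains that value. You instead decouple the problem: the inner pointwise minimization over $A_t$ is a strictly convex QP whose unique solution is the symmetric Lyapunov solution of $A_tP_t+P_tA_t=\dot P_t$, with value $\tfrac12\trace(A_t\dot P_t)=\tfrac14\g_{P_t,{\rm Bures}}(\dot P_t)$ (since the Bures multiplier $M_t$ equals $2A_t$ by uniqueness on $\mS^n$), and the outer problem is then closed by invoking \eqref{eq:PtBures}. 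Each approach leans on a previously established geodesic fact --- \eqref{eq:Benamou} for the paper, \eqref{eq:PtBures} for you --- but yours stays entirely in the finite-dimensional matrix setting, makes explicit the Bures connection the paper only mentions in passing, and delivers uniqueness of the optimal $A_t$ for each fixed path via strict convexity rather than through the infinite-dimensional OMT argument. The paper's route buys the additional insight that linear velocity fields lose nothing relative to general ones for Gaussian marginals. One point worth making explicit in your write-up: with the paper's normalization of $\g_{P,{\rm Bures}}$, the minimum of $\int_0^1\g_{P_t,{\rm Bures}}(\dot P_t)\,dt$ is $4\,\dd_{{\rm w}_2}(P_0,P_1)^2$, so your factor of $\tfrac14$ is exactly what reconciles the reduced objective with the claimed optimal value; you sidestep any ambiguity by evaluating $\trace(QP_0Q)$ directly, which is the safe move, but a reader will want to see that consistency check spelled out.
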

\begin{proof}
The optimization problem \eqref{eq:minA_transp} takes a special form of \eqref{eq:Benamou} with $p_0(\bx), p_1(\bx)$ being two zero-mean Gaussian probability density functions and an additional constraint that the velocity field $\bv_t(\bx)=A_t\bx$. 
Thus, $\dd_{{\rm w}_2}(P_0, P_1)^2$ is a lower bound of \eqref{eq:minA_transp}. Therefore, we only need to show that $A_{t}^{\rm omt}$ satisfies the constraint and provides the optimal value. 
\begin{align}\label{eq:dotPomt}
\dot P_{t}^{\rm omt}= A_{t}^{\rm omt} P_{t}^{\rm omt}+ P_{t}^{\rm omt}(A_{t}^{\rm omt})'.
\end{align}

First, we rewrite \eqref{eq:Ptomt} as
$P_{t}^{\rm omt}=(I-tQ) P_0(I-tQ)$.
Taking the derivative of $P_{t}^{\rm omt}$ gives
\begin{align*}
\dot P_{t}^{\rm omt}=&-Q P_0(I-tQ)-(I-tQ)P_0Q\\
=&~Q(tQ-I)^{-1}P_t+P_t Q(tQ-I)^{-1}.
\end{align*}
Since all the eigenvalues of $Q$ are smaller than $1$, the matrix $tQ-I$ is invertible for all $t\in [0, 1]$.
Therefore \eqref{eq:dotPomt} holds.
Moreover,
\begin{align*}
&\trace(A_{t}^{\rm omt} P_{t}^{\rm omt} A_{t}^{\rm omt})=\trace(QP_0Q)\\
&=\|P_1^{1/2}\hat U-P_0^{1/2}\|_{\rm F}^2=\dd_{{\rm w}_2}(P_0,P_1)^2,
\end{align*}
which completes the proof.
\end{proof}

We note that the matrix $A_t^{\rm omt}$ is symmetric. Moreover, the matrices $A_{t_1}^{\rm omt}$ and $A_{t_2}^{\rm omt}$ commute for any $t_1, t_2$. Therefore the eigenspace of $A_t$ is fixed on the interval $t\in[0, 1]$.

The results from Proposition \eqref{thm:transp} can be further extended to obtain the optimal solutions corresponding to the following objective function
\begin{align}\label{eq:fPtW}
\f_{P_t}^W(A_t)=\cE_{p_t}(\|\dot \bx_t\|_W^2)=\trace(WA_tP_tA_t'),
\end{align}
where $\dot \bx_t=A_t \bx$, $W\in \mS^{n}_{++}$ and $\|\dot \bx_t\|_W^2:=\bx' W\bx$. 
By applying change of variables, we define 
\begin{align}
P_{W,t}&:=W^{\tfrac12}P_tW^{\tfrac12},\label{eq:PW}\\
A_{W,t}&:=W^{\tfrac12}A_tW^{-\tfrac12}\label{eq:AW}.
\end{align}
Thus, $\f_{P_t}^W(A_t)=\trace(A_{W,t} P_{W,t} A_{W,t}')=\f_{P_{W,t}}(A_{W,t})$. Moreover, if \eqref{eq:dotP} holds, then 
\[
\dot P_{W,t}=A_{W,t} P_{W,t}+P_{W,t}A_{W,t}'.
\]
Therefore, if $A_{t}^{\rm omt}$ is the optimal system matrix that steers $P_{W,0}$ to $P_{W,1}$ with respect to $\f_P(A)$ given by Proposition \ref{thm:transp}, then $W^{-\tfrac12}A_{t}^{\rm omt}W^{\tfrac12}$ is the optimal solution with respect to $\f_P^W(A)$.
In the following section, we investigate a further extension of $\f_{P}^W(\cdot)$ by using a time-dependent weighting matrix which provides a point of contact between OMT and information geometry.

\section{Information-geometry based covariance paths}\label{sec:info}

\subsection{The Fisher-Rao metric}
For two probability density functions $p(\bx)$ and $\hat p(\bx)$ on $\mR^n$, the Kullback-Leibler (KL) divergence 
\begin{align}\label{eq:DKL}
\dd_{\rm KL}(p|| \hat p):=\int_{\mR^n} p \log\left(\frac{p}{\hat p} \right)d\bx 
\end{align}
represents a well-established notion of distance between the two \cite{KL1951,CoverBook}. If $\hat p=p+\delta$ with $\delta$ representing a small perturbation, then the quadratic term of the Taylor's expansion of $\dd_{\rm KL}(p || p+\delta)$ in terms of $\delta$ is the Fisher information metric 
\[
\g_{p,{\rm Fisher}}(\delta)=\int \frac{\delta^2}{p}d\bx.
\]
For a probability distribution $p(\bx,\btheta)$ parameterized by a vector $\btheta$, the corresponding metric is referred to as the Fisher-Rao metric and given by
\[
\g_{\btheta,{\rm Rao}}(\delta_{\btheta})=\delta_{\btheta}'\cE\left[\left(\frac{\partial \log p}{\partial \btheta} \right)\left(\frac{\partial \log p}{\partial \btheta} \right)' \right]\delta_{\btheta}.
\]
If $p(\bx)$ is a zero-mean Gaussian probability density function parameterized by a covariance matrix $P$, then the metric becomes
\[
\g_{P,{\rm Rao}}(\Delta)=\trace(P^{-1}\Delta P^{-1}\Delta).
\]
Given $P_0, P_1\in \mS^{n}_{++}$, the geodesic in \eqref{eq:Ptinfo} is equal to the solution
\begin{align}\label{eq:Ptinfo_min}
P_{t}^{\rm info}=\underset{P_t}{\argmin} \int_0^1 \g_{P,{\rm Rao}}(\dot P_t)dt,
\end{align}
which is the shortest path connecting $P_0$ and $P_1$.
The corresponding path length is equal to
\begin{align}\label{eq:Distinfo}
\dd_{\rm info}(P_0, P_1)=\|\log(P_0^{-\tfrac12}P_1P_0^{-\tfrac12}) \|_{\rm F},
\end{align}
see e.g. \cite[Theorem 6.1.6]{Bhatia2007}

\subsection{Fisher-Rao metric based linear systems}
Following \eqref{eq:minA}, we will define a positive quadratic form so that the optimal state covariance is equal to $P_t^{\rm info}$. 
One choice for the quadratic form would be given by 
\begin{align} 
\f_{P}^{{\rm info},1}(A):&=\g_{P,{\rm Rao}}(A P+PA')\nonumber\\
&=2\trace(AA+P^{-1}APA'),\label{eq:fPinfo1}
\end{align}
which satisfies that $\f_{P}^{{\rm info},1}(A)\geq0$, $\forall A\in \mR^{n\times n}$ and $P\in \mS^{n}_{++}$.  
\begin{prop}\label{thm:info1}
Given $P_0, P_1\in \mS^{n}_{++}$, $P_t^{\rm info}$ from \eqref{eq:Ptinfo} is the unique minimizer of
\begin{align}\label{eq:min_info1}
\underset{P_t, A_t}{\min}\bigg\{ \int_0^1 \f_{P_t}^{{\rm info},1}(A_t)dt \mid& \dot P_t=A_tP_t+P_tA_t', P_0, P_1 \mbox{ specified}\bigg\},
\end{align}
with $\f_{P_t}^{\rm info}(A_t)$ defined by \eqref{eq:fPinfo1} 
and the optimal $A_t$ is equal to
\begin{align}\label{eq:At_info}
A^{\rm info}:= \frac12 P_0^{\tfrac12}\log(P_0^{-\tfrac12} P_1P_0^{-\tfrac12}) P_0^{-\tfrac12}.
\end{align}
Moreover, the optimal value of the objective function is equal to $\dd_{\rm info}(P_0, P_1)^2$. 
\end{prop}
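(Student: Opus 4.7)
My plan is to reduce Proposition 2 directly to the Fisher-Rao geodesic-action principle in \eqref{eq:Ptinfo_min}. The key observation is that $\f_P^{\rm info,1}$ was defined precisely so that $\f_P^{\rm info,1}(A) = \g_{P,\rm Rao}(AP+PA')$. Consequently, whenever the feasibility constraint $\dot P_t = A_tP_t+P_tA_t'$ holds, the integrand collapses to $\g_{P_t,\rm Rao}(\dot P_t)$, which depends on the path $P_t$ alone and not on the particular $A_t$ generating $\dot P_t$. Since every smooth trajectory $P_t$ connecting $P_0$ to $P_1$ in $\mS^{n}_{++}$ admits at least one feasible $A_t$ (for instance, the symmetric one obtained by solving the Lyapunov equation), the minimization in \eqref{eq:min_info1} is equivalent to the pure path minimization \eqref{eq:Ptinfo_min}, whose unique minimizer is $P_t^{\rm info}$.

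To identify the minimum value with $\dd_{\rm info}(P_0,P_1)^2$, I would invoke the standard Cauchy-Schwarz bound relating Riemannian action to length: for any path $P_t$,
\begin{equation*}
\int_0^1 \g_{P_t,\rm Rao}(\dot P_t)\,dt \;\geq\; \bigg(\int_0^1 \sqrt{\g_{P_t,\rm Rao}(\dot P_t)}\,dt\bigg)^{\!2} \;\geq\; \dd_{\rm info}(P_0,P_1)^2,
\end{equation*}
with equality when $P_t$ is a constant-speed geodesic. A direct calculation using the representation $P_t^{\rm info} = P_0^{1/2}e^{tL}P_0^{1/2}$ with $L := \log(P_0^{-1/2}P_1P_0^{-1/2})$ shows that $(P_t^{\rm info})^{-1}\dot P_t^{\rm info}$ reduces to the time-independent matrix $P_0^{-1/2}LP_0^{1/2}$; its squared Frobenius-type trace evaluates to $\trace(L^2) = \|L\|_{\rm F}^2 = \dd_{\rm info}(P_0,P_1)^2$, confirming that $P_t^{\rm info}$ is parameterized at constant Fisher-Rao speed and that the integral attains the claimed value.

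Next I would verify that $A^{\rm info}$ from \eqref{eq:At_info} is a valid driving matrix by plugging $P_t^{\rm info}$ into the Lyapunov equation. The same exponential representation gives $\dot P_t^{\rm info} = P_0^{1/2}Le^{tL}P_0^{1/2}$, while symmetry of $L$ and $P_0^{1/2}$ yields $A^{\rm info}P_t^{\rm info} + P_t^{\rm info}(A^{\rm info})' = \tfrac12 P_0^{1/2}(Le^{tL} + e^{tL}L)P_0^{1/2}$, which coincides with $\dot P_t^{\rm info}$ because $L$ commutes with $e^{tL}$. Notably $A^{\rm info}$ is time-invariant, so the Fisher-Rao geodesic is the state-covariance path of an LTI system, mirroring the structural result obtained in Proposition~1.

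The main subtlety worth flagging is uniqueness of $A_t$: the linear map $A \mapsto AP+PA'$ has kernel $\{MP^{-1} : M'=-M\}$, so any optimal $A_t$ is determined only up to this antisymmetric gauge, and the proposition should be read as identifying $A^{\rm info}$ as the canonical time-invariant representative of the optimal family. Uniqueness of the trajectory $P_t^{\rm info}$ itself is inherited from the strict convexity of the Fisher-Rao action along geodesics, a standard fact in information geometry for which I would cite \cite{Bhatia2007}. I expect this gauge ambiguity to be the only genuinely subtle point in writing up the proof; the rest of the argument is a direct exponential-calculus verification.
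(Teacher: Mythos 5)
Your proposal is correct and follows the same underlying route as the paper: since $\f_{P}^{{\rm info},1}(A)=\g_{P,{\rm Rao}}(AP+PA')$, the problem collapses to the pure path minimization \eqref{eq:Ptinfo_min}, and one then verifies that $A^{\rm info}$ generates $P_t^{\rm info}$ via \eqref{eq:dotP}. The paper's own proof records only that last verification (the exponential representation $P_t^{\rm info}=e^{A^{\rm info}t}P_0e^{{A^{\rm info}}'t}$ and its derivative) and leaves the reduction, the existence of a feasible $A_t$ for every path, and the evaluation of the optimal cost implicit; you supply all three, and your Cauchy--Schwarz/constant-speed computation giving $\trace(L^2)=\dd_{\rm info}(P_0,P_1)^2$ is the standard way to close that gap. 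Your remark on the gauge ambiguity is also well taken and, in fact, sharper than the paper: the map $A\mapsto AP+PA'$ has kernel $\{MP^{-1}: M'=-M\}$ of dimension $n(n-1)/2$, so the paper's assertion that this map is injective is false for $n\geq 2$, and since $\f_{P}^{{\rm info},1}$ depends on $A$ only through $AP+PA'$, the optimal $A_t$ in this proposition is determined only up to that antisymmetric gauge --- exactly as you say, $A^{\rm info}$ should be read as the canonical time-invariant representative (uniqueness of the \emph{pair} is genuinely recovered only for $\f_{P}^{{\rm info},2}$ in the following proposition).
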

\begin{proof}
We rewrite \eqref{eq:Ptinfo} as
\begin{align*}
P_{t}^{\rm info}&=P_0^{\tfrac12}(P_0^{-\tfrac12} P_1P_0^{-\tfrac12})^t P_0^{\tfrac12}\\
&=P_0^{\tfrac12}e^{\tfrac12 \log(P_0^{-\tfrac12} P_1P_0^{-\tfrac12})t }e^{\tfrac12 \log(P_0^{-\tfrac12} P_1P_0^{-\tfrac12})t }P_0^{\tfrac12}\\
&=e^{A^{\rm info}t}P_0e^{{A^{\rm info}}'t},
\end{align*}
where the last equation is obtained using $e^{XYX^{-1}}=Xe^YX^{-1}$.
Taking the derivative of $P_{t}^{\rm info}$ to give that
\begin{align}\label{eq:AtPt_info}
\dot P_t^{\rm info}=A^{\rm info}P_t^{\rm info}+P_t^{\rm info} {A^{\rm info}}',
\end{align}
which completes the proof.
\end{proof}

Note that the metric $\dd_{\rm info}(\cdot)$ is invariant with respect to congruence transforms, i.e. $\dd_{\rm info}(P_0, P_1)=\dd_{\rm info}(TP_0T', TP_1T')$ for any invertible matrix $T$. If $A_{t}^{\rm info}$ is the optimal solution of \eqref{eq:min_info1}. Then $TA^{\rm info}T^{-1}$ is the optimal solution corresponding to the pair $TP_0T', TP_1T'$.

\subsection{A weighted-mass-transport view}
Since the map from $A_t$ to $\dot P_t$ in \eqref{eq:dotP} is injective, the quadratic forms of $A_t$ that lead to $P_t^{\rm info}$ is not unique. 
Here, we provide an alternative form, which provides an interesting relation between OMT and the Fisher-Rao metric. 
For this purpose, we define the following weighted square norm
\begin{align}\label{eq:fPinfo2}
\f_{P_t}^{{\rm info},2}(A_t)=4\cE_{p_t}(\|\dot \bx_t\|_{P_t^{-1}}^2)=4\trace(P_t^{-1}A_tP_t A_t'),
\end{align}
which is a special form of \eqref{eq:fPtW} with the weighting matrix $W=P_t$ and scaled by the factor of $4$. 
The following lemma draws the relation between $\f_{P}^{{\rm info},2}(A)$ and $\f_{P}^{{\rm info},1}(A)$.
\begin{lemma}\label{lemma:fPinfo}
Consider $\f_{P}^{{\rm info},1}(\cdot)$ and $\f_{P}^{{\rm info},2}(\cdot)$ be defined in \eqref{eq:fPinfo1} and \eqref{eq:fPinfo2}, respectively. Then,
\[
\f_{P}^{{\rm info},2}(A)\geq \f_{P}^{{\rm info},1}(A), \forall P\in \mS^{n}_{++}, A\in \mR^{n\times n}.
\]
\end{lemma}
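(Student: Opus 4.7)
The plan is to reduce the claimed inequality to a well-known trace inequality by means of a single congruence change of variables, then finish with a symmetric/antisymmetric decomposition.

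First, I would write out the difference explicitly. Using the definitions \eqref{eq:fPinfo1} and \eqref{eq:fPinfo2},
\begin{align*}
\f_{P}^{{\rm info},2}(A)-\f_{P}^{{\rm info},1}(A) = 4\trace(P^{-1}APA') - 2\trace(A^2) - 2\trace(P^{-1}APA') = 2\bigl[\trace(P^{-1}APA')-\trace(A^2)\bigr],
\end{align*}
so the task reduces to proving $\trace(P^{-1}APA')\geq \trace(A^2)$ for every $P\in\mS^{n}_{++}$ and every $A\in\mR^{n\times n}$.

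Next, I would make the change of variable $B:=P^{-1/2}AP^{1/2}$, which is legitimate since $P^{1/2}\in\mS^{n}_{++}$. By the cyclic invariance of trace, the substitution gives $\trace(A^2)=\trace(B^2)$ and $\trace(P^{-1}APA')=\trace(BB')=\|B\|_{\rm F}^2$. The inequality to prove therefore collapses to $\|B\|_{\rm F}^2 \geq \trace(B^2)$, a statement about an arbitrary real square matrix.

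Finally, I would split $B=S+K$ into its symmetric part $S=(B+B')/2$ and antisymmetric part $K=(B-B')/2$. Using $\trace(SK)=0$ (which follows from $\trace(SK)=\trace((SK)')=\trace(K'S')=-\trace(SK)$), one gets $\|B\|_{\rm F}^2 = \|S\|_{\rm F}^2+\|K\|_{\rm F}^2$ and $\trace(B^2)=\|S\|_{\rm F}^2-\|K\|_{\rm F}^2$, so that $\|B\|_{\rm F}^2 - \trace(B^2) = 2\|K\|_{\rm F}^2 \geq 0$. Tracing back through the substitution yields $\f_{P}^{{\rm info},2}(A)-\f_{P}^{{\rm info},1}(A) = 4\|K\|_{\rm F}^2 \geq 0$, with equality exactly when $P^{-1/2}AP^{1/2}$ is symmetric.

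There is no real obstacle here; the only ``trick'' is spotting that the congruence $A\mapsto P^{-1/2}AP^{1/2}$ simultaneously turns $\trace(P^{-1}APA')$ into the Frobenius norm squared and leaves $\trace(A^2)$ invariant, after which the inequality is the elementary $\|B\|_{\rm F}^2\geq\trace(B^2)$.
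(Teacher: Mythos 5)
Your proof is correct and is essentially the paper's own argument: the paper also reduces the difference to $2\trace(P^{-1}APA'-AA)$ and exhibits it as $\trace(XX')$ with $X=P^{-1/2}AP^{1/2}-P^{1/2}A'P^{-1/2}$, which is exactly your $2K$, so both proofs identify the gap as (four times) the squared Frobenius norm of the antisymmetric part of $P^{-1/2}AP^{1/2}$. Your equality characterization also matches the remark the paper makes immediately after the lemma.
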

\begin{proof}
Taking the difference
\begin{align*}
&\f_{P}^{{\rm info},2}(A)-\f_{P}^{{\rm info},1}(A)\\
=&~2\trace(P^{-1}APA'-AA)\\
=&~\trace\left((P^{-\tfrac12}AP^{\tfrac12}-P^{\tfrac12}A'P^{-\tfrac12})(P^{-\tfrac12}AP^{\tfrac12}-P^{\tfrac12}A'P^{-\tfrac12})'\right),
\end{align*}
which is non-negative.
\end{proof}
We note that if $P^{-\tfrac12}AP^{\tfrac12}$ is symmetric, then $\f_{P}^{{\rm info},1}(A)$ is equal to $\f_{P}^{{\rm info},2}(A)$. This gives rise to the following proposition in parallel to Proposition \ref{thm:info1}.
\begin{prop}\label{prop:info2}
Given $P_0, P_1\in \mS^{n}_{++}$. Then $P_t^{\rm info}, A^{\rm info}$ given by \eqref{eq:Ptinfo} and \eqref{eq:At_info}, respectively, are the unique pair of minimizer of
\begin{align}\label{eq:min_info2}
\underset{P_t, A_t}{\min}\bigg\{ \int_0^1 \f_{P_t}^{{\rm info},2}(A_t)dt \mid& \dot P_t=A_tP_t+P_tA_t', P_0, P_1 \mbox{ specified}\bigg\},
\end{align}
with $\f_{P_t}^{{\rm info},2}(A_t)$ defined by \eqref{eq:fPinfo2}.
\end{prop}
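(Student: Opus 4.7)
The plan is to deduce Proposition \ref{prop:info2} as a direct corollary of Proposition \ref{thm:info1} together with Lemma \ref{lemma:fPinfo}. First I would integrate the pointwise bound in Lemma \ref{lemma:fPinfo} and combine it with the optimal value from Proposition \ref{thm:info1}, obtaining for every admissible pair $(P_t,A_t)$ the chain
\[
\int_0^1 \f_{P_t}^{{\rm info},2}(A_t)\,dt \;\geq\; \int_0^1 \f_{P_t}^{{\rm info},1}(A_t)\,dt \;\geq\; \dd_{\rm info}(P_0,P_1)^2.
\]
This at once establishes $\dd_{\rm info}(P_0,P_1)^2$ as a lower bound for \eqref{eq:min_info2} and will also drive the uniqueness argument at the end.

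The second step is to verify that the candidate pair $(P_t^{\rm info},A^{\rm info})$ saturates this bound. Using the remark preceding the proposition, this reduces to showing that $(P_t^{\rm info})^{-\tfrac12}A^{\rm info}(P_t^{\rm info})^{\tfrac12}$ is symmetric for every $t\in[0,1]$, equivalently that $A^{\rm info}P_t^{\rm info}=P_t^{\rm info}(A^{\rm info})'$. Diagonalizing $P_0^{-\tfrac12}P_1P_0^{-\tfrac12}=V\Lambda V'$ with $V$ orthogonal and $\Lambda$ diagonal, one sees that $A^{\rm info}P_0=\tfrac12 P_0^{\tfrac12}V\log(\Lambda)V'P_0^{\tfrac12}$ is symmetric, so $A^{\rm info}P_0=P_0(A^{\rm info})'$. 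This identity propagates through the power series of the exponential to yield $e^{A^{\rm info}t}P_0=P_0 e^{(A^{\rm info})'t}$, and inserting the representation $P_t^{\rm info}=e^{A^{\rm info}t}P_0 e^{(A^{\rm info})'t}$ from the proof of Proposition \ref{thm:info1} produces the desired symmetry. Combining this with Proposition \ref{thm:info1} then gives $\int_0^1 \f_{P_t^{\rm info}}^{{\rm info},2}(A^{\rm info})\,dt=\dd_{\rm info}(P_0,P_1)^2$, so the lower bound is attained.

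Uniqueness falls out of the same sandwich: any minimizer $(\tilde P_t,\tilde A_t)$ of \eqref{eq:min_info2} forces equalities throughout the chain, in particular $\int_0^1 \f_{\tilde P_t}^{{\rm info},1}(\tilde A_t)\,dt=\dd_{\rm info}(P_0,P_1)^2$, so $(\tilde P_t,\tilde A_t)$ also minimizes \eqref{eq:min_info1}, and the uniqueness assertion in Proposition \ref{thm:info1} then yields $(\tilde P_t,\tilde A_t)=(P_t^{\rm info},A^{\rm info})$. The main obstacle in this plan is the symmetry check for $A^{\rm info}P_t^{\rm info}$, but as sketched it reduces to a simultaneous diagonalization and a one-line power-series observation, so the whole argument leans entirely on the earlier proposition and lemma.
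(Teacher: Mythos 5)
Your proposal is correct and follows essentially the same route as the paper: sandwich the objective via Lemma \ref{lemma:fPinfo} and the optimal value of Proposition \ref{thm:info1}, then check that the candidate pair attains equality. The only difference is that you explicitly carry out the equality check (showing $A^{\rm info}P_t^{\rm info}=P_t^{\rm info}(A^{\rm info})'$ via $A^{\rm info}P_0=P_0(A^{\rm info})'$ and the power series of the exponential) and spell out the uniqueness step, both of which the paper leaves as ``straightforward to verify.''
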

\begin{proof}
From Lemma \ref{lemma:fPinfo}, 
\begin{align}
\int_0^1 \f_{P_t}^{{\rm info},2}(A_{t})dt \geq \int_0^1 \f_{P_t}^{{\rm info},1}(A_{t})dt\geq \dd_{\rm info}(P_0, P_1)^2,
\end{align}
for any feasible pairs of $P_t$ and $A_t$.
It is straightforward to verify that the above inequalities become equalities with the given $P_t^{\rm info}$ and $A^{\rm info}$. Therefore, the proposition holds.
\end{proof}

\subsection{A fluid-mechanics interpretation}
Note that $\f_{P_t}^{{\rm info},2}(A_t)$ is a special case of $\f_{P_t}^W(A)$ in \eqref{eq:fPtW} when $W=4P_t^{-1}$. It is equal to 
\[
\f_{P_t}^{{\rm info},2}(A_t)=4 \cE_{p_t}(\|\bv_t(\bx)\|_{P_t^{-1}}^2),
\]
with the velocity field given by $\bv_t(\bx)=A_t\bx$.
Thus if the initial distribution $p_0(\bx)$ is Gaussian, so is $p_t(\bx), \forall t\geq0$.
Proposition \eqref{prop:info2} implies that among all the trajectories that connect two Gaussian probability density functions $p_0$ and $p_1$, the lowest weighted-mass-transport cost is obtained by Gaussian density functions whose covariance matrices are equal to $P_t^{\rm info}$. But this optimal trajectory is obtained under the linear constraint of velocity fields. Next, we remove this constraint and show that this trajectory is still optimal.

\begin{thm}\label{thm:InfoOMT}
Given two zero-mean Gaussian probability density functions $p_0(\bx), p_1(\bx)$ on $\mR^n$ with covariance matrices $P_0, P_1\in \mS^{n}_{++}$, respectively. 
Define $p_t^{\rm info}(\bx), \bv_t^{\rm info}(\bx)$ as the minimizer of
\begin{align}\label{eq:InfoOMT}
\underset{p_t,\bv_t}{\inf}&\bigg\{4 \int_0^1 \cE_{p_t}\left(\|\bv_t(\bx)\|_{P_t^{-1}}^2\right) dt\mid\dot p_t+\nabla_\bx\cdot(p_t\bv_t)=0\bigg\}.
\end{align}
Then $p_t^{\rm info}(\bx)$ is zero-mean Gaussian whose covariance matrix is equal to $P_t^{\rm info}$ from \eqref{eq:Ptinfo} and $\bv_t^{\rm info}(\bx)=A^{\rm info}\bx$ almost surely with $A^{\rm info}$ given by \eqref{eq:At_info}.
Moreover, the optimal value is equal to $\dd_{\rm info}(P_0, P_1)^2$.
\end{thm}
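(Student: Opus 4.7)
My plan is to separate the argument into an upper bound and a matching lower bound. The upper bound is immediate from Proposition \ref{prop:info2}: the pair $p_t^{\rm info}(\bx)$ (zero-mean Gaussian with covariance $P_t^{\rm info}$) together with the linear velocity field $\bv_t(\bx)=A^{\rm info}\bx$ is feasible for \eqref{eq:InfoOMT}, and its cost equals $\f_{P_t^{\rm info}}^{{\rm info},2}(A^{\rm info})$ integrated in $t$, which by Proposition \ref{prop:info2} evaluates to $\dd_{\rm info}(P_0,P_1)^2$. So the infimum is at most $\dd_{\rm info}(P_0,P_1)^2$.

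For the lower bound, I would reduce the general problem to the linear-velocity-field setting already handled by Proposition \ref{prop:info2}. Given any admissible pair $(p_t,\bv_t)$, set $P_t:=\cE_{p_t}(\bx\bx')$. Multiplying the continuity equation in \eqref{eq:InfoOMT} by $\bx\bx'$ and integrating by parts (assuming decay of $p_t$ suffices, a routine matter for paths of finite action with Gaussian endpoints) produces the moment identity
\begin{align*}
\dot P_t=\cE_{p_t}\bigl(\bv_t(\bx)\bx'+\bx \bv_t(\bx)'\bigr).
\end{align*}
Then I would define the ``best linear approximation'' to $\bv_t$ in $L^2(p_t)$ by
\begin{align*}
A_t:=\cE_{p_t}\bigl(\bv_t(\bx)\bx'\bigr)P_t^{-1},\qquad \bw_t(\bx):=\bv_t(\bx)-A_t\bx.
\end{align*}
By construction $\dot P_t=A_tP_t+P_tA_t'$, so the pair $(P_t,A_t)$ is feasible for the optimization in Proposition \ref{prop:info2}, and $\cE_{p_t}(\bw_t(\bx)\bx')=0$.

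The crux is then a single orthogonality calculation: since $\cE_{p_t}(\bw_t\bx')=0$, the cross terms in the expansion of $\bv_t'P_t^{-1}\bv_t=(A_t\bx+\bw_t)'P_t^{-1}(A_t\bx+\bw_t)$ vanish in expectation under $p_t$, yielding
\begin{align*}
4\,\cE_{p_t}\bigl(\|\bv_t\|_{P_t^{-1}}^2\bigr)=\f_{P_t}^{{\rm info},2}(A_t)+4\,\cE_{p_t}\bigl(\bw_t'P_t^{-1}\bw_t\bigr)\geq \f_{P_t}^{{\rm info},2}(A_t).
\end{align*}
Integrating in $t$ and invoking Proposition \ref{prop:info2} on the pair $(P_t,A_t)$ gives the lower bound $\dd_{\rm info}(P_0,P_1)^2$. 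Equality forces $\bw_t\equiv 0$ in the $p_t$-a.s.\ sense, so $\bv_t(\bx)=A_t\bx$ almost surely, and the uniqueness clause of Proposition \ref{prop:info2} then identifies $A_t=A^{\rm info}$ and $P_t=P_t^{\rm info}$; since $p_t$ is then transported by a linear vector field with Gaussian initial data, $p_t^{\rm info}$ is Gaussian.

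The main obstacle I anticipate is the technical justification of the moment-dynamics step: one must ensure that $P_t$ remains positive definite along any finite-action path (so that $A_t$ and $P_t^{-1}$ are well-defined), and that the integration by parts is legitimate under only the weak regularity guaranteed by the continuity equation. A standard way around this is to approximate $p_t$ by mollified densities with bounded support/exponential tails, apply the calculation in the smooth case, and pass to the limit using lower semicontinuity of the action functional; a Gr\"onwall estimate on $P_t$ driven by $A_t$ prevents $P_t$ from degenerating on $[0,1]$ when the action is finite. Modulo this measure-theoretic plumbing, the proof is essentially the two-line orthogonality argument above coupled with Proposition \ref{prop:info2}.
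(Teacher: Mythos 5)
Your proposal is correct, and it shares the paper's overall architecture: the upper bound by exhibiting the Gaussian path with the linear velocity field $A^{\rm info}\bx$, and the lower bound by reducing the infinite-dimensional problem to one over second moments. The difference is in how that reduction is executed and, more importantly, how the reduced problem is finished. The paper forms the joint second-moment matrix \eqref{eq:VCP}, relaxes \eqref{eq:InfoOMT} to the matrix problem \eqref{eq:InfoOMT_lb}, eliminates $V_t$ via the Schur-complement inequality $V_t\succeq C_tP_t^{-1}C_t'$, and then solves the resulting matrix optimal control problem from scratch with Pontryagin's principle (the Hamiltonian $h_1$), showing the optimal $C_t$ is symmetric so that the cost collapses to $\g_{P,{\rm Rao}}(\dot P_t)$ and \eqref{eq:Ptinfo_min} applies. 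Your regression decomposition $\bv_t=A_t\bx+\bw_t$ with $A_t=\cE_{p_t}(\bv_t\bx')P_t^{-1}$ encodes exactly the same inequality --- the orthogonality identity
\begin{align*}
4\,\cE_{p_t}\bigl(\|\bv_t\|_{P_t^{-1}}^2\bigr)=\f_{P_t}^{{\rm info},2}(A_t)+4\,\cE_{p_t}\bigl(\|\bw_t\|_{P_t^{-1}}^2\bigr)
\end{align*}
is the Schur complement of \eqref{eq:VCP} in probabilistic clothing --- but your finishing move is more economical: once $(P_t,A_t)$ is feasible for \eqref{eq:min_info2} you invoke Proposition \ref{prop:info2} directly, whereas the paper in effect re-derives that proposition's content inside the proof of the theorem. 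Your route makes the logical dependence on Proposition \ref{prop:info2} (including its uniqueness clause, which you correctly use to pin down $A_t\equiv A^{\rm info}$, $P_t\equiv P_t^{\rm info}$, and to force $\bw_t\equiv 0$ so that $p_t$ is the pushforward of $p_0$ under $e^{A^{\rm info}t}$, hence Gaussian) explicit, and the equality analysis is cleaner. The regularity issues you flag --- differentiability of $P_t$, validity of the integration by parts, nondegeneracy of $P_t$ along finite-action paths --- are treated at the same formal level in the paper's own proof, so deferring that plumbing does not put you below the paper's standard of rigor.
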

\begin{proof}
First, we define 
\begin{align}\label{eq:VCP}
\left[\begin{matrix} V_t & C_t\\ C_t' & P_t \end{matrix}\right]:=\cE_{p_t}\left(\left[\begin{matrix}\bv_t(\bx)\\ \bx \end{matrix} \right] \left[\begin{matrix}\bv_t(\bx)'& \bx' \end{matrix} \right] \right).
\end{align}
Then applying integral by parts to obtain that
\begin{align}\label{eq:Pt_Ct}
&\dot P_t=\int_{\mR^n}\bx\bx' \dot p_t(\bx)d\bx\nonumber\\
&=\int_{\mR^n}-\bx\bx' \nabla \cdot (p_t(\bx) \bv_t(\bx)) d\bx=C_t+C_t'.
\end{align}
Therefore, the following optimization problem
\begin{align}\label{eq:InfoOMT_lb}
\min_{C_t, V_t, P_t} \bigg\{4\int_0^1 \trace(P_t^{-1}V_t) dt \mid& \left[\begin{matrix} V_t & C_t\\ C_t' & P_t \end{matrix}\right]\in \mS_+^{2n\times 2n},\nonumber\\
 &\dot P_t=C_t+C_t' \bigg\}
\end{align}
provides a lower bound of \eqref{eq:InfoOMT} because the higher-order moments of the probability density functions are not considerd.
On the other hand, \eqref{eq:min_info2} provides an upper bound of \eqref{eq:InfoOMT} because the velocity field is constrained to satisfy the linear system. 
Then, we show that the two bounds coincide.

Note that $V_t-C_t P_t^{-1} C_t'\in \mS_{+}^{n\times n}$. Thus the optimal $V_t$ of \eqref{eq:InfoOMT_lb} should satisfy that $V_t=C_tP_t^{-1}C_t'$. Therefore, \eqref{eq:InfoOMT_lb} is equal to
\begin{align}\label{eq:InfoOMT_lb2}
\min_{C_t, P_t\in \mS_{+}^{n\times n}} &\bigg\{4\int_0^1 \trace(P_t^{-1}C_tP_t^{-1} C_t') dt \mid \dot P_t=C_t+C_t'\bigg\}.
\end{align}
Note that the constrain $P_t\in \mS_+^{n\times n}$ is automatically satisfied due to the inverse barrier objective function. Therefore, we drop the constraint that $P_t\in \mS_+^{n\times n}$ in the following analysis.

Next, we consider the optimization problem \eqref{eq:InfoOMT_lb2} as an optimal control problem with $C_t$ being matrix-valued control. Then we derive the optimal solution using Pontryagin's minimum principle. A necessary condition for the optimal solution is that it much annihilate the variation of the Hamiltonian
\[
h_1(C_t,P_t,\Pi_t):=4\trace(P_t^{-1}C_t P_t^{-1} C_t')+\trace(\Pi_t (C_t+C_t'))
\]
with respect to the control $C_t$. Here, $\Pi_t$ is a symmetric matrix representing the Lagrange multiplier, i.e. the co-state. By setting the partial derivative of $h_1(\cdot)$ with respect to $C_t$ to zero, we obtain that
\begin{align}\label{eq:Ct_eq}
C_t=-\frac14 P_t\Pi_t P_t,
\end{align}
which provides a necessary condition that the optimal $C_t$ is symmetric. Therefore $C_t=\tfrac12\dot P_t$ and the objective function in \eqref{eq:InfoOMT_lb} becomes $\trace(P_t^{-1}\dot P_t P_t^{-1}\dot P_t )=\g_{P,{\rm Rao}}(\dot P_t)$. Thus, the theorem directly follows from \eqref{eq:Ptinfo_min}. For completeness, we finish the proof based on the Hamiltonian $h_1(\cdot)$ in below.

The optimal $\dot \Pi_t$ must annihilate the partial derivative of $h_1(\cdot)$ with respect to $P_t$. This gives rise to
\begin{align}\label{eq:Pit_eq}
\dot \Pi_t=8P_t^{-1}C_tP_t^{-1}C_t'P_t^{-1}.
\end{align}
Then, substituting \eqref{eq:Ct_eq} into \eqref{eq:Pt_Ct} and \eqref{eq:Pit_eq} to obtain that
\begin{align}
\dot P_t&=-\frac12 P_t \Pi_t P_t,\label{eq:Pt_necessary}\\
\dot \Pi_t&= \frac12 \Pi_t P_t \Pi_t.\nonumber
\end{align}
Note that $\dot P_t\Pi_t+P_t \dot \Pi_t=0$ for all $t$. Hence $P_t\Pi_t$ is constant. We set 
\begin{align}\label{eq:PtPiA}
-\frac14 P_t\Pi_t=A.
\end{align}
Thus \eqref{eq:Ct_eq} is equal to $C_t=AP_t=P_t A'$. Multiplying both sides by $P_t^{-\tfrac12}$ gives that $P_t^{-\tfrac12}C_tP_t^{-\tfrac12}=P_t^{-\tfrac12}A P_t^{\tfrac12}$ which is symmetric for all $t$.
Substituting \eqref{eq:PtPiA} to \eqref{eq:Pt_necessary} to give that
\[
\dot P_t=AP_t+P_tA'.
\]
Therefore, 
\[
P_t=e^{At}P_0e^{A't}.
\]
Multiplying $P_0^{-\tfrac12}$ to both sides to give
\[
P_0^{-\frac12}P_tP_0^{-\tfrac12}=P_0^{-\tfrac12}e^{At}P_0  e^{A't}P_0^{-\tfrac12}=e^{2P_0^{-\tfrac12}AP_0^{\tfrac12}t}.
\]
By setting $t=1$, we solve that
\[
A=\frac12 P_0^{\tfrac12}\log(P_0^{-\frac12}P_1P_0^{-\tfrac12})P_0^{-\tfrac12},
\]
which is equal to $A^{\rm info}$.
Furthermore, from \eqref{eq:AtPt_info}, the corresponding $P_t$ is equal to $P_t^{\rm info}$. Then the optimal covariance matrix in \eqref{eq:VCP} is singular and has rank $n$. Thus the optimal velocity field $\bv_t(\bx)$ is equal to $A^{\rm info}\bx$ almost surely, implying that the corresponding $p_t(\bx)$ is Gaussian. Therefore, the theorem is proved.
\end{proof}

Note that the system matrix $A^{\rm info}$ is constant. Thus both $A^{\rm info}$ and $A_t^{\rm omt}$ have fixed eigenspaces. Next, we introduce a different quadratic form of $A_t$ which leads to system matrices with rotating eigenspace.  

\section{Rotation-linear-system based covariance paths}\label{sec:WLS}

\subsection{Weighted-least-squares cost functions}

Note that if $X$ is an asymmetric matrix, i.e. $X'=-X$, then $e^{Xt}$ is a rotation matrix. 
Consequently, if the system matrix $A$ is asymmetric, then the state covariance matrix has a rotating eigenspace.
In this regard, we decompose  
\[
A=A_{\ms}+{A}_{\ma},
\] 
where
$A_{\ms}:=\tfrac12(A+A')$,
$A_{\ma}:=\tfrac12(A-A')$
are the symmetric and asymmetric parts of $A$, respectively.
Then, we define the following weighted-least-squares (WLS) function
\begin{align}\label{eq:feps}
\f_{\epsilon}(A):=\|A_{\ms}\|_{\rm F}^2+\epsilon\|A_{\ma}\|_{\rm F}^2,
\end{align}
where the scalar $\epsilon>0$ weighs the relative significance of symmetric and asymmetric parts of $A$.
If $A$ satisfies $\dot P=AP+PA'$ for a given pair $\dot P$ and $P$, then $\f_{\epsilon}(A)$ is considered as a quadratic form of the non-commutative division of $\dot P$ by $P$, similar to the Fisher-Rao metric. Actually, for scalar-valued covariances, $\f_{\epsilon}(A)$ is equal to the Fisher-Rao metric.

Following \eqref{eq:minA}, we consider the optimal solution to
\begin{align}\label{eq:minA_eps}
\underset{P_t,A_t}{\min} \bigg\{ \int_0^1 \f_{\epsilon}(A_t)dt \mid& \dot P_t=A_t P_t+P_tA_t', P_0, P_1 \mbox{ specified}\bigg\},
\end{align}
for a given pair of endpoints $P_0, P_1\in \mS^n_{++}$ and a scalar $\epsilon>0$.

\subsection{Optimal covariance paths}
To introduce the solution to \eqref{eq:minA_eps}, we define 
\begin{align}\label{eq:TA}
T_{\epsilon,t}(A):=e^{(1+\epsilon) A_{\ma} t}e^{\left( A_{\ms} +\epsilon  A_{\ma} '\right)t}.
\end{align}
The next lemma shows that $T_{\epsilon,t}(\cdot)$ is equal to the state transition matrix of a linear time-varying system.
\begin{lemma}\label{lemma:transition}
Given $A\in \mR^{n\times n}$ and a scalar $\epsilon$. Define 
\begin{align*}
A_{\epsilon,t}&:=e^{(1+\epsilon)A_{\ma}t}A e^{(1+\epsilon)A_{\ma}'t}.\\
\end{align*}
Then
\begin{align}\label{eq:lemma_transition}
\dot T_{\epsilon,t}(A)=A_{\epsilon,t}  T_{\epsilon,t}(A).
\end{align}
\end{lemma}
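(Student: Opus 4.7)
The plan is to treat $T_{\epsilon,t}(A)$ as a product of two matrix exponentials and differentiate directly. Writing $B := (1+\epsilon)A_{\ma}$ and $C := A_{\ms} + \epsilon A_{\ma}'$, we have $T_{\epsilon,t}(A) = e^{Bt} e^{Ct}$. Using the product rule, $\dot T_{\epsilon,t}(A) = B\, e^{Bt} e^{Ct} + e^{Bt} C\, e^{Ct}$, which I would rewrite as $\bigl( B + e^{Bt} C\, e^{-Bt}\bigr) T_{\epsilon,t}(A)$. So the lemma reduces to verifying the identity
\[
B + e^{Bt} C\, e^{-Bt} \;=\; A_{\epsilon,t}.
\]

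First I would exploit the fact that $A_{\ma}$ is antisymmetric, so $A_{\ma}' = -A_{\ma}$. This gives $C = A_{\ms} - \epsilon A_{\ma}$ and also $e^{(1+\epsilon)A_{\ma}'t} = e^{-Bt}$, so $A_{\epsilon,t} = e^{Bt}(A_{\ms} + A_{\ma}) e^{-Bt}$. The second key observation is that $B$ is a scalar multiple of $A_{\ma}$, hence $A_{\ma}$ commutes with $e^{Bt}$, and consequently
\[
e^{Bt} A_{\ma} e^{-Bt} = A_{\ma}.
\]

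Combining these two simplifications, the right-hand side expands as $A_{\epsilon,t} = e^{Bt} A_{\ms} e^{-Bt} + A_{\ma}$, and the candidate left-hand side expands as
\[
B + e^{Bt}(A_{\ms} - \epsilon A_{\ma}) e^{-Bt} = (1+\epsilon)A_{\ma} + e^{Bt} A_{\ms} e^{-Bt} - \epsilon A_{\ma} = A_{\ma} + e^{Bt} A_{\ms} e^{-Bt}.
\]
The two sides agree, which finishes the verification and hence the lemma.

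I do not expect a genuine obstacle here: the only subtlety is keeping track of where the antisymmetry of $A_{\ma}$ is used (to turn the transpose in the second exponential into a sign flip) and where the commutation of $A_{\ma}$ with $e^{Bt}$ is used (to cancel the $\epsilon A_{\ma}$ contribution against the $\epsilon A_{\ma}$ inside $B$). Both steps are routine once $T_{\epsilon,t}(A)$ is read as $e^{Bt} e^{Ct}$ and the derivative is normalized into the form $(\text{something})\,T_{\epsilon,t}(A)$.
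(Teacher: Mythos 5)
Your proof is correct and takes essentially the same approach as the paper: both differentiate the product $e^{Bt}e^{Ct}$ by the product rule and use $A_{\ma}'=-A_{\ma}$ to identify the result with $A_{\epsilon,t}T_{\epsilon,t}(A)$. The only cosmetic difference is that the paper collapses the two terms into $e^{Bt}(B+C)e^{Ct}=e^{Bt}Ae^{Ct}$ directly, while you normalize to $\bigl(B+e^{Bt}Ce^{-Bt}\bigr)T_{\epsilon,t}(A)$ and then cancel the $\epsilon A_{\ma}$ contributions using the commutation of $A_{\ma}$ with $e^{Bt}$ — both are valid one-line verifications.
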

\begin{proof}
\begin{align*}
\dot T_{\epsilon,t}(A)=&\,e^{(1+\epsilon) A_{\ma} t}((1+\epsilon) A_{\ma}) e^{\left( A_{\ms} +\epsilon  A_{\ma} '\right)t}+e^{(1+\epsilon) A_{\ma} t}(A_{\ms} +\epsilon  A_{\ma} ') e^{\left( A_{\ms} +\epsilon  A_{\ma} '\right)t}\\
=&\,e^{(1+\epsilon) A_{\ma}t} A e^{\left( A_{\ms} +\epsilon  A_{\ma} '\right)t}=A_{\epsilon,t}  T_{\epsilon,t}(A).
\end{align*}
\end{proof}

The following corollary is a direct result of Lemma \ref{lemma:transition}.
\begin{cor}
Given $P_0\in \mS^{n}_{++}, A\in \mR^{n\times n}$ and a scalar $\epsilon$. Define
\begin{align*}
P_{\epsilon,t}&:=T_{\epsilon,t}(A)P_0T_{\epsilon,t}(A)'.
\end{align*}
Then the following equation holds
\begin{align}
\dot P_{\epsilon,t}&=A_{\epsilon,t} P_{\epsilon,t}+P_{\epsilon,t}A_{\epsilon,t}.\label{eq:dotPtwsl}
\end{align}
\end{cor}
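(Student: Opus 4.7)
The plan is to verify the identity by a one-line product-rule differentiation of the definition $P_{\epsilon,t} := T_{\epsilon,t}(A) P_0 T_{\epsilon,t}(A)'$, invoking Lemma \ref{lemma:transition} to identify the driving matrix. Since that lemma shows that $T_{\epsilon,t}(A)$ is precisely the state-transition matrix of the linear time-varying system $\dot X_t = A_{\epsilon,t} X_t$, the corollary is essentially the standard Lyapunov differential equation governing the covariance of such a system, with $P_0$ playing the role of the initial covariance.

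Concretely, I would first apply the product rule to obtain
\[
\dot P_{\epsilon,t} \;=\; \dot T_{\epsilon,t}(A)\, P_0\, T_{\epsilon,t}(A)' \;+\; T_{\epsilon,t}(A)\, P_0\, \dot T_{\epsilon,t}(A)'.
\]
Next, I would substitute $\dot T_{\epsilon,t}(A) = A_{\epsilon,t} T_{\epsilon,t}(A)$ from Lemma \ref{lemma:transition} into the first term, and take the transpose of the same identity, namely $\dot T_{\epsilon,t}(A)' = T_{\epsilon,t}(A)' A_{\epsilon,t}'$, to handle the second term. Finally, collecting the common factor $T_{\epsilon,t}(A) P_0 T_{\epsilon,t}(A)' = P_{\epsilon,t}$ between $A_{\epsilon,t}$ on the left and $A_{\epsilon,t}'$ on the right yields the asserted Lyapunov-type identity in one line.

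There is no substantive obstacle here; the only point worth flagging is an apparent typographical omission of the transpose in the second term of \eqref{eq:dotPtwsl}. The computation above unambiguously produces $P_{\epsilon,t} A_{\epsilon,t}'$ rather than $P_{\epsilon,t} A_{\epsilon,t}$, and this is consistent with the general Lyapunov form \eqref{eq:dotP} used throughout the paper (note that $A_{\epsilon,t}$ is not symmetric in general, since $A_{\epsilon,t}' = e^{(1+\epsilon)A_{\ma}t} A' e^{(1+\epsilon)A_{\ma}'t} \neq A_{\epsilon,t}$). I would therefore present the conclusion in the corrected form $\dot P_{\epsilon,t} = A_{\epsilon,t} P_{\epsilon,t} + P_{\epsilon,t} A_{\epsilon,t}'$.
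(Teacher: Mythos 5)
Your proof is correct and matches the paper's intent exactly: the paper states this corollary without proof as a ``direct result'' of Lemma \ref{lemma:transition}, and the product-rule argument you give is precisely that direct consequence. Your observation that the second term in \eqref{eq:dotPtwsl} should read $P_{\epsilon,t}A_{\epsilon,t}'$ (with a transpose) is also right; this is a typographical slip in the paper, since $A_{\epsilon,t}$ is not symmetric in general.
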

Next, we are ready to present the solution to \eqref{eq:minA_eps}.
\begin{prop}\label{thm_wls}
Given $P_0, P_1\in \mS^{n}_{++}$ and a scalar $\epsilon>0$. If there exists a $\Pi_0\in \mS^n$ such that
\begin{align}
P_{\epsilon,t}^{\rm wls}&=T_{\epsilon,t}(A_0) P_0T_{\epsilon,t}( A_0)',\label{eq:Ptwslopt}
\end{align}
satisfies that $P_{\epsilon,1}^{\rm wls}=P_1$ with 
\begin{align}\label{eq:minMS}
A_0=-\tfrac12(P_0 \Pi_0+ \Pi_0 P_0)-\tfrac{1}{2\epsilon}(\Pi_0 P_0-P_0\Pi_0),
\end{align}
and $T_{\epsilon,t}(\cdot)$ given by \eqref{eq:TA}, then $P_{\epsilon,t}^{\rm wls}$ is a minimizer of \eqref{eq:minA_eps}.
The corresponding optimal $A_t$ is equal to
\begin{align}
A_{\epsilon,t}^{\rm wls}&= e^{(1+\epsilon)  A_\ma t} A_0 e^{(1+\epsilon)A_\ma' t}.\label{eq:Atwslopt}
\end{align}
\end{prop}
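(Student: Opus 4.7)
The plan is to apply Pontryagin's minimum principle, in direct analogy with the proof of Theorem~\ref{thm:InfoOMT}. Introducing a symmetric co-state $\Pi_t\in\mS^n$, I form the Hamiltonian
\begin{align*}
h(A_t,P_t,\Pi_t)=\|A_{t,\ms}\|_{\rm F}^2+\epsilon\|A_{t,\ma}\|_{\rm F}^2+\trace(\Pi_t(A_tP_t+P_tA_t')).
\end{align*}
Using symmetry of $\Pi_t,P_t$, the last term equals $2\trace(A_tP_t\Pi_t)$, and splitting $P_t\Pi_t$ into its symmetric part $\tfrac12(P_t\Pi_t+\Pi_tP_t)$ and antisymmetric part $\tfrac12(P_t\Pi_t-\Pi_tP_t)$ decouples the minimization over $A_{t,\ms}$ and $A_{t,\ma}$ (trace of a symmetric times an antisymmetric matrix vanishes). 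Minimizing each piece in its respective subspace yields
\begin{align*}
A_{t,\ms}=-\tfrac12(P_t\Pi_t+\Pi_tP_t),\qquad A_{t,\ma}=\tfrac{1}{2\epsilon}(P_t\Pi_t-\Pi_tP_t),
\end{align*}
which at $t=0$ reproduces \eqref{eq:minMS}, and is the unique pointwise minimizer by strong convexity of $h$ in $A_t$.

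From the co-state equation $\dot\Pi_t=-(A_t'\Pi_t+\Pi_tA_t)$ (obtained from $\partial h/\partial P_t$ projected onto $\mS^n$) and the state equation, a short calculation gives $\dot(P_t\Pi_t)=[A_t,P_t\Pi_t]$ and $\dot(\Pi_tP_t)=[\Pi_tP_t,A_t']$. Using $A_t=A_{t,\ms}+A_{t,\ma}$, $A_t'=A_{t,\ms}-A_{t,\ma}$ and introducing $X_t:=P_t\Pi_t+\Pi_tP_t$, $Y_t:=P_t\Pi_t-\Pi_tP_t$, these combine into
\begin{align*}
\dot Y_t=[A_{t,\ms},X_t]+[A_{t,\ma},Y_t],\qquad \dot X_t=[A_{t,\ms},Y_t]+[A_{t,\ma},X_t].
\end{align*}
Substituting the first-order expressions $A_{t,\ms}=-X_t/2$ and $A_{t,\ma}=Y_t/(2\epsilon)$, both brackets in $\dot Y_t$ individually vanish, so $Y_t\equiv Y_0=2\epsilon A_{0,\ma}$ is conserved and $A_{t,\ma}\equiv A_{0,\ma}$ is time-invariant. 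The same substitutions collapse $\dot X_t$ to $[(1+\epsilon)A_{0,\ma},X_t]$, which integrates to $X_t=R_tX_0R_t'$, where $R_t:=e^{(1+\epsilon)A_{0,\ma}t}$ is orthogonal because $A_{0,\ma}$ is antisymmetric.

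Combining, and using that $R_t$ commutes with $A_{0,\ma}$, I get $A_t=-\tfrac12 X_t+\tfrac{1}{2\epsilon}Y_0=R_tA_{0,\ms}R_t'+A_{0,\ma}=R_tA_0R_t'$, which is precisely \eqref{eq:Atwslopt}. The corollary following Lemma~\ref{lemma:transition}, applied with $A=A_0$, then produces $P_t=T_{\epsilon,t}(A_0)P_0T_{\epsilon,t}(A_0)'$, i.e., \eqref{eq:Ptwslopt}; the hypothesized $\Pi_0$ is what enforces the terminal condition $P_{\epsilon,1}^{\rm wls}=P_1$, closing the two-point boundary-value problem. The main obstacle will be the commutator bookkeeping that establishes $\dot Y_t=0$: the cancellation is not generic to the Hamiltonian flow but depends crucially on the specific FOC-dictated form of $A_{t,\ms},A_{t,\ma}$ derived in the first step, without which neither bracket in $\dot Y_t$ would vanish.
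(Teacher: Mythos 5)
Your proposal is correct and takes essentially the same route as the paper's proof: both apply Pontryagin's principle to the Hamiltonian $\f_\epsilon(A_t)+\trace(\Pi_t(A_tP_t+P_tA_t'))$, obtain the first-order condition $A_{t,\ms}=-\tfrac12(P_t\Pi_t+\Pi_tP_t)$, $A_{t,\ma}=\tfrac{1}{2\epsilon}(P_t\Pi_t-\Pi_tP_t)$, show the asymmetric part is conserved, and derive the rotation equation $\dot A_{t,\ms}=(1+\epsilon)(A_\ma A_{t,\ms}+A_{t,\ms}A_\ma')$ before integrating to get \eqref{eq:Atwslopt} and \eqref{eq:Ptwslopt}. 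Your commutator bookkeeping with $X_t,Y_t$ is just a reorganization of the paper's direct verification that $\dot{(\Pi_tP_t)}-\dot{(P_t\Pi_t)}=0$, and both arguments establish the same (local) optimality claimed in the proposition.
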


\begin{proof}
Consider \eqref{eq:minA_eps} as an optimal control problem with $A_t$ being matrix-valued control. Then, the Hamiltonian is as follows
\begin{align*}
h_2(A_t, P_t, \Pi_t)=&\tfrac14\|A_t+A_t'\|_{\rm F}^2+\tfrac{\epsilon}{4} \|A_t-A_t'\|_{\rm F}^2+\trace(\Pi_t(A_tP_t+P_tA_t')),\\
=&\tfrac{1+\epsilon}{2}\trace(A_tA_t')+\tfrac{1-\epsilon}{2}\trace(A_tA_t)+\trace(\Pi_t(A_tP_t+P_tA_t')).
\end{align*}
It is necessary that $\dot \Pi_t$ annihilates the partial derivative of $h_2(\cdot)$ with respect to $P_t$, which gives rise to
\begin{align}
\dot \Pi_t=-\Pi_tA_t-A_t'\Pi_t.\label{eq:Pt_a}
\end{align}
Moreover, the partial derivative of $h_2(\cdot)$ with respect to the control $A_t$ vanishes, which leads to
\begin{align}
(A_t+A_t')+\epsilon(A_t-A_t')+2\Pi_t P_t=0.\label{eq:At_a}
\end{align}
Solving $A_t$ from \eqref{eq:At_a} to obtain that
\begin{align}\label{eq:At_b}
A_t=-\tfrac12(P_t\Pi_t+\Pi_tP_t)-\tfrac{1}{2\epsilon}(\Pi_tP_t-P_t\Pi_t).
\end{align}
Then, substituting \eqref{eq:At_b} in \eqref{eq:dotP} and \eqref{eq:Pt_a}, respectively, to obtain
\begin{align*}
\dot P_t&=(-1+\tfrac{1}{\epsilon})P_t\Pi_tP_t-(\tfrac12+\tfrac{1}{2\epsilon})(\Pi_tP_t^2+P_t^2\Pi_t),\\
\dot \Pi_t&=(1-\tfrac1{\epsilon})\Pi_t P_t\Pi_t+(\tfrac12+\tfrac{1}{2\epsilon})(\Pi_t^2P_t+P_t\Pi_t^2).
\end{align*}
Next, it can be verified that $\dot{(\Pi_tP_t)}-\dot{(P_t\Pi_t)}=0$. 
Thus, the asymmetric part of $A_t$, which is equal to
\[
(A_{t})_{\ma}=-\tfrac{1}{2\epsilon}(\Pi_tP_t-P_t\Pi_t),
\]
is constant and denoted by $A_\ma$.
Taking the derivative of its symmetric part 
\[
(A_{t})_{\ms}=\tfrac12(A_t+A_t')=-\tfrac12(P_t\Pi_t+\Pi_tP_t)
\] 
gives that
\begin{align*}
\dot {(A_{t})_\ms}&=-\tfrac12 \dot{(P_t\Pi_t)}-\tfrac12 \dot{(\Pi_tP_t)}\\
&=\tfrac{1+\epsilon}{2\epsilon}(P_t\Pi_t^2 P_t-\Pi_t P_t^2\Pi_t)\\
&=(1+\epsilon) \big(A_\ma (A_t)_\ms+(A_{t})_\ms A_\ma'\big).
\end{align*}
Since $A_\ma$ is constant, the solution to the above equation is equal to 
\[
(A_t)_\ms =e^{(1+\epsilon)A_\ma t} A_{\ms} e^{(1+\epsilon)A_\ma't}.
\]
Therefore, the optimal $A_t$ has the form
\begin{align*}
A_t&=e^{(1+\epsilon)A_\ma t} A_{\ms} e^{(1+\epsilon)A_\ma't}+A_\ma,\\
&=e^{(1+\epsilon)A_\ma t}A e^{(1+\epsilon)A_\ma't},
\end{align*}
with $A=A_\ma+A_\ms$ being the initial value of $A_t$.
Next, we define a new variable
\begin{align}\label{eq:hatPt}
\hat P_t:=e^{(1+\epsilon)A_\ma 't}P_te^{(1+\epsilon)A_\ma t},
\end{align}
whose derivative is equal to
\begin{align*}
\dot{\hat{P_t}}=&~e^{(1+\epsilon)A_\ma 't}(A_tP_t+P_tA_t')e^{(1+\epsilon)A_\ma t}\\
&+(1+\epsilon)A_\ma' \hat P_t+\hat P_t(1+\epsilon)A_\ma\\
=&~(A_\ms+\epsilon A_\ma') \hat P_t+\hat P_t (A_\ms+\epsilon A_\ma).
\end{align*}
Thus the solution to $\hat P_t$ is equal to
\[
\hat{P_t}=e^{(A_\ms+\epsilon A_\ma')t} P_0 e^{(A_\ms+\epsilon A_\ma)t}.
\]
Substituting this solution to \eqref{eq:hatPt} to obtain that the optimal $P_t$ has the form
\[
P_t=e^{(1+\epsilon)A_\ma t}e^{(A_\ms+\epsilon A_\ma')t} P_0e^{(A_\ms+\epsilon A_\ma)t}e^{(1+\epsilon)A_\ma't}.
\]
In a similar way, we define $\hat \Pi_t=e^{(1+\epsilon)A_\ma 't}\Pi_te^{(1+\epsilon)A_\ma t}$. Then
\[
\dot{\hat{\Pi}}_t=(-A_\ms+\epsilon A_\ma') \hat \Pi_t+\hat \Pi_t (-A_\ms+\epsilon A_\ma),
\] 
whose solution is equal to
\begin{align}\label{eq:Pit_exp}
\Pi_t=e^{(1+\epsilon)A_\ma t}e^{(-A_\ms+\epsilon A_\ma')t} \Pi_0 e^{(-A_\ms+\epsilon A_\ma)t}e^{(1+\epsilon)A_\ma't}.
\end{align}
If \eqref{eq:minMS} holds, then $A_\ms+\epsilon A_\ma'=-P_0\Pi_0$. It is straightforward to show that \eqref{eq:At_b} holds for all $t>0$ for the provided expressions for $A_t, P_t, \Pi_t$.
In this case, $\f_\epsilon(A_t)=\|A_\ms\|_{\rm F}^2+\epsilon \|A_\ma\|_{\rm F}^2$ for all $t$. Therefore, if the $A$ is equal to $\hat A$ in \eqref{eq:minMS}, then the proposed trajectories $A_{\epsilon,t}^{\rm wls}$ and $P_{\epsilon, t}^{\rm wls}$ is local minimizer, which completes the proof.
\end{proof}

Next, we provide an upper bound of the optimal value of \eqref{eq:minA_eps} for all $\epsilon>0$. For this purpose, we define
\[
\hat A= \log (P_0^{-\tfrac12}(P_0^{\tfrac12}P_1P_0^{\tfrac12})^{\tfrac12}P_0^{-\tfrac12}),
\]
which is symmetric. Moreover, $P_t=e^{\hat At}P_0e^{\hat At}$ is a feasible solution to \eqref{eq:minA_eps}. Therefore, the following proposition holds
\begin{prop}
Given $P_0, P_1\in \mS^n_{++}$ and a scalar $\epsilon>0$. Then the optimal value of \eqref{eq:minA_eps} is not larger than $\| \log (P_0^{-\tfrac12}(P_0^{\tfrac12}P_1P_0^{\tfrac12})^{\tfrac12}P_0^{-\tfrac12})\|_{\rm F}^2$. 
\end{prop}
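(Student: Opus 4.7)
The plan is to exhibit an explicit feasible pair $(P_t, A_t)$ for the optimization problem \eqref{eq:minA_eps} and evaluate its cost, yielding an upper bound on the optimum. The hint in the statement is the matrix $\hat A$, which is symmetric (being the logarithm of a positive-definite symmetric matrix, since $P_0^{-\tfrac12}(P_0^{\tfrac12}P_1P_0^{\tfrac12})^{\tfrac12}P_0^{-\tfrac12}$ is positive definite and symmetric). The natural candidate feasible pair is $A_t \equiv \hat A$ constant, with $P_t = e^{\hat A t} P_0 e^{\hat A t}$.

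First I would verify the dynamics: because $\hat A$ is symmetric and commutes with $e^{\hat A t}$, a direct differentiation shows $\dot P_t = \hat A P_t + P_t \hat A' $, so the constraint in \eqref{eq:minA_eps} holds with $A_t = \hat A$. Next I would verify the boundary conditions. Clearly $P_0$ is correct at $t=0$. For $t=1$, set $B := e^{\hat A} = P_0^{-\tfrac12}(P_0^{\tfrac12}P_1P_0^{\tfrac12})^{\tfrac12}P_0^{-\tfrac12}$. A short computation using $P_0^{-\tfrac12} P_0 P_0^{-\tfrac12} = I$ gives
\begin{align*}
B P_0 B &= P_0^{-\tfrac12}(P_0^{\tfrac12}P_1P_0^{\tfrac12})^{\tfrac12} \cdot P_0^{-\tfrac12} P_0 P_0^{-\tfrac12} \cdot (P_0^{\tfrac12}P_1P_0^{\tfrac12})^{\tfrac12} P_0^{-\tfrac12} \\
&= P_0^{-\tfrac12} (P_0^{\tfrac12}P_1P_0^{\tfrac12}) P_0^{-\tfrac12} = P_1,
\end{align*}
so indeed $P_1 = e^{\hat A} P_0 e^{\hat A}$ and the candidate is feasible.

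Finally I would evaluate the objective. Since $\hat A$ is symmetric, its symmetric part is $\hat A$ itself and its asymmetric part vanishes, so for any $\epsilon>0$ we have $\f_{\epsilon}(\hat A) = \|\hat A\|_{\rm F}^2 + \epsilon \cdot 0 = \|\hat A\|_{\rm F}^2$. With $A_t$ constant in $t$ this integrates to
\[
\int_0^1 \f_{\epsilon}(\hat A)\,dt = \|\hat A\|_{\rm F}^2 = \big\| \log \big(P_0^{-\tfrac12}(P_0^{\tfrac12}P_1P_0^{\tfrac12})^{\tfrac12}P_0^{-\tfrac12}\big)\big\|_{\rm F}^2,
\]
which serves as an upper bound on the minimum, independent of $\epsilon$. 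The only genuine checkpoint is the boundary verification $BP_0B=P_1$; everything else is immediate from symmetry of $\hat A$, so no real obstacle is expected.
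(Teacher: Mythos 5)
Your proof is correct and follows exactly the paper's argument: the paper likewise defines the symmetric matrix $\hat A= \log (P_0^{-\tfrac12}(P_0^{\tfrac12}P_1P_0^{\tfrac12})^{\tfrac12}P_0^{-\tfrac12})$ and observes that $P_t=e^{\hat At}P_0e^{\hat At}$ is a feasible path whose cost is $\|\hat A\|_{\rm F}^2$. You have simply filled in the routine verifications (the boundary identity $BP_0B=P_1$ and the vanishing asymmetric part) that the paper leaves implicit.
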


\subsection{On the existence and uniqueness of rotation-system based paths}
We will analyze the existence of a covariance path of the form \eqref{eq:Ptwslopt} that connects two given $P_0, P_1\in \mS^n_{++}$.
To simplify notations, we denote $\alpha=(1+\epsilon)/(2\epsilon)$. Furthermore, we remove the constraint that $\epsilon>0$ and consider all $\epsilon\neq 0$. 
Based on the new parameter $\alpha$, the closed-form equations \eqref{eq:Ptwslopt}, \eqref{eq:minMS} and \eqref{eq:Atwslopt} have defined the following mapping
\begin{align}
h_{\alpha, P_0} (\Pi)&~:~ \mS^{n} \rightarrow \mS^{n}_{++}\nonumber\\
&~:~ \Pi \mapsto e^{\alpha (P_0\Pi-\Pi P_0)}e^{-P_0\Pi} P_0 e^{-\Pi P_0}e^{\alpha(\Pi P_0-P_0\Pi )}.\label{eq:halapha}
\end{align}
We will analyze the existence of $\Pi$ that satisfies 
\begin{align}\label{eq:halpha}
h_{\alpha, P_0} (\Pi)=P_1,
\end{align}
for a given $\alpha$. 
If there exists a $\Pi\in \mS^n$ that solves \eqref{eq:halpha} with $\alpha\neq \frac12$, then there exists a covariance path of the form \eqref{eq:Ptwslopt} with $\epsilon=1/(2\alpha-1)$.

In the special case when $\alpha=0$, \eqref{eq:halpha} becomes $e^{-P_0\Pi}P_0 e^{-\Pi P_0}=P_1$, which is equivalent to
\[
\exp(-2P_0^{\tfrac12}\Pi P_0^{\tfrac12})=P_0^{-\tfrac12}P_1P_0^{-\tfrac12}.
\]
Clearly, it has a unique solution given by
\begin{align}\label{eq:Pi0}
\Pi_0=-\tfrac12 P_0^{-\tfrac12} \log(P_0^{-\tfrac12}P_1P_0^{-\tfrac12}) P_0^{-\tfrac12}.
\end{align}
It is interesting to note that the corresponding covariance path is equal to the Fisher-Rao-based geodesics $P_{t}^{\rm info}$ given by \eqref{eq:Ptinfo}. 

Because the mapping $h_{\alpha, P_0}$ is continuous in term of $\alpha$ and $\Pi$, it is expected that \eqref{eq:halpha} also has a solution if $\alpha$ is sufficiently close to zero.
Specifically, we assume that exists a solution $\Pi$ to \eqref{eq:halpha} for a specific $\alpha$, e.g. $\alpha=0$. Then, we can derive the following expression from \eqref{eq:halpha}
\begin{align}\label{eq:Pialpha}
\Pi  =-\frac12 P_0^{-\tfrac12} \log(P_0^{-\tfrac12}e^{\alpha (\Pi P_0-P_0 \Pi)} P_1 e^{\alpha(P_0 \Pi -\Pi P_0 )}P_0^{-\tfrac12})P_0^{-\tfrac12}.
\end{align}
Next, we will apply perturbation analysis to the above equation to understand the solutions associated with different values for $\alpha$.
Specifically, let $\delta_\alpha$ and $\Delta_\Pi$ denote perturbations to $\alpha$ and $\Pi$, respectively, so that $\alpha+\delta_\alpha$ and $\Pi+\Delta_\Pi$ still satisfy \eqref{eq:Pialpha}. Then, for small perturbations, the perturbation\footnote{
For $A, \Delta\in \mR^{n\times n}$,  
\[
e^{A+\Delta}=e^A+ M_{e^A}(\Delta) + o(\|\Delta\|),
\]
where $M_X(\Delta)$ denotes the non-commutative multiplication of $\Delta$ by $X$ which is defined as
\begin{align}\label{eq:MX}
M_X(\Delta)=\int_0^1 X^{1-\tau} \Delta X^{\tau} d\tau.
\end{align}
For positive definite matrices $A, A+\Delta\in \mS^n_{++}$, 
\[
\log(A+\Delta)=\log(A)+M_{A}^{-1}(\Delta) + o(\|\Delta\|),
\]
where $M_X^{-1}(\Delta)$ denotes the non-commutative devision of $\Delta$ by $X$ which is defined as
\begin{align}\label{eq:MXinv}
M_X^{-1}(\Delta)=\int_0^\infty (X+\tau I)^{-1} \Delta (X+\tau I)^{-1} d\tau.
\end{align}
}  of both sides of \eqref{eq:Pialpha} gives rise to
\begin{align}\label{eq:perturbation}
\Delta_\Pi =-\frac12 P_0^{-\tfrac12}&M_{Q}^{-1}\bigg(\alpha P_0^{-\tfrac12}M_{U}(\Delta_\Pi P_0-P_0\Delta_\Pi)P_1 U' P_0^{-\tfrac12} \nonumber\\
&+\alpha P_0^{-\tfrac12}U P_1 M_{U'}(P_0 \Delta_\Pi-\Delta_\Pi P_0) P_0^{-\tfrac12} \nonumber\\
&+\delta_\alpha  P_0^{-\tfrac12}(\Pi P_0-P_0 \Pi)UP_1 U'P_0^{-\tfrac12}\nonumber\\
&+\delta_\alpha  P_0^{-\tfrac12}UP_1 U'(P_0 \Pi-\Pi P_0) P_0^{-\tfrac12}   \bigg)P_0^{-\tfrac12}+o(|\delta_\alpha|)+o(\|\Delta_\Pi\|),
\end{align}
where $o(|\delta_\alpha|)+o(\|\Delta_\Pi\|)$ denotes higher order terms of the perturbations, 
\begin{align*}
U&=e^{\alpha (\Pi P_0-P_0 \Pi)},\\
Q&=P_0^{-\tfrac12}UP_1 U' P_0^{-\tfrac12},
\end{align*}
and $M_X^{-1}(\cdot)$ and $M_X(\cdot)$ are defined in \eqref{eq:MX} and \eqref{eq:MXinv}, respectively.
Next, we combine all the terms containing $\Delta_\Pi$ on the right hand side of \eqref{eq:perturbation} to define the following linear mapping $\hat h_{\alpha, P_0, \Pi} : \mS^n \rightarrow \mS^n$,
\begin{align}\label{eq:hath}
\hat h_{\alpha, P_0, \Pi} : \Delta_\Pi \mapsto  -\frac{1}{2} P_0^{-\tfrac12}&M_{Q}^{-1}\bigg(P_0^{-\tfrac12}M_{U}(\Delta_\Pi P_0-P_0\Delta_\Pi)P_1 U' P_0^{-\tfrac12} \nonumber\\
&+P_0^{-\tfrac12}U P_1 M_{U'}( P_0 \Delta_\Pi-\Delta_\Pi P_0) P_0^{-\tfrac12}  \bigg)P_0^{-\tfrac12}.
\end{align}
Then the terms that containing the $\delta_\alpha$ is equal to $\delta_\alpha \hat h_{\alpha, P_0, \Pi} (\Pi)$,
Thus, \eqref{eq:perturbation} is equivalent to
\begin{align}
(I-\alpha \hat h_{\alpha, P_0, \Pi})(\Delta_\Pi)=\delta_\alpha \hat h_{\alpha, P_0, \Pi} (\Pi)+o(|\delta_\alpha|)+o(\|\Delta_\Pi\|),
\end{align}
where $I$ denotes the identity mapping. 

Let $\alpha_\tau= \alpha \tau$ denote a smooth trajectory on the interval $\tau \in [0, 1]$ for a given $\alpha$.
If the linear mapping $I-\alpha \hat h_{\alpha_\tau, P_0, \Pi_\tau}$ is invertible, then solution to the following differential equation 
\begin{align}\label{eq:DiffPi}
\frac{d}{d\tau} \Pi_\tau=  (I-\alpha_\tau \hat h_{\alpha_\tau, P_0, \Pi_\tau})^{-1}\circ \alpha  \hat h_{\alpha_\tau, P_0, \Pi_\tau} (\Pi_\tau),
\end{align}
at $\tau=1$ with the initial value given by $\Pi_0$ in \eqref{eq:Pi0} is equal to the unique solution to \eqref{eq:halpha}. 

The following theorem provides a sufficient condition on the existence and uniqueness of the solution. 
To introduce the results, we let $\lambda_{\rm min}(P)$ and $\lambda_{\rm max}(P)$ denote the smallest and the largest eigenvalues of a matrix $P\in \mS^n$. Moreover, we define the following pseudo-norm of a matrix $P\in \mS^n$:
\begin{align}\label{eq:Norms}
\|P\|_{\ma}:=\max_{\Delta\in \mS^n, \Delta\neq 0}\frac{\|\Delta P-P\Delta\|}{\|\Delta\|}.
\end{align}
Note that if $P$ is equal to the identify matrix scaled by any scalar then $\|P\|_{\ma}=0$. If $\|P\|_\ma=0$, we follow the conventions to define $\lambda/\|P\|_\ma=+\infty$ for any $\lambda>0$.
Then, we obtain the following theorem.

\begin{thm}\label{prop:lowalpha}
For a pair of positive definite matrices $P_0, P_1\in \mS^n_{++}$, if $\alpha$ is a scalar such that
\begin{align}\label{eq:alphaBound}
|\alpha| <  \max\left\{ \frac{\lambda_{\rm min}(P_0) \lambda_{\rm min}(P_1)}{\| P_0\|_\ma \lambda_{\rm max}(P_1)}, \frac{\lambda_{\rm min}(P_0) \lambda_{\rm min}(P_1)}{\| P_1\|_\ma \lambda_{\rm max}(P_0)}\right\},
\end{align}
then there exists a unique $\Pi\in \mS^n$ that satisfies 
\begin{align*}
e^{\alpha (P_0\Pi-\Pi P_0)}e^{-P_0\Pi} P_0 e^{-\Pi P_0}e^{\alpha(\Pi P_0-P_0\Pi )}=P_1.
\end{align*}
\end{thm}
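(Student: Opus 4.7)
I would prove Theorem~\ref{prop:lowalpha} by a continuation argument in $\alpha$ driven by the matrix-valued ODE \eqref{eq:DiffPi}. At $\alpha = 0$, equation \eqref{eq:halpha} collapses to $e^{-P_0 \Pi} P_0 e^{-\Pi P_0} = P_1$, which admits the unique symmetric solution $\Pi_0$ given in \eqref{eq:Pi0}. For the target value of $\alpha$, introduce the straight-line path $\alpha_\tau = \alpha \tau$ for $\tau \in [0, 1]$ and lift it to a path of solutions $\Pi_\tau$ by solving \eqref{eq:DiffPi} with initial condition $\Pi_{\tau=0} = \Pi_0$. If this initial-value problem has a solution on all of $[0, 1]$, then $\Pi := \Pi_1$ satisfies $h_{\alpha, P_0}(\Pi) = P_1$, and uniqueness of the ODE flow together with uniqueness at $\alpha=0$ propagates forward to give uniqueness of $\Pi$.

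The main step is to show that the right-hand side of \eqref{eq:DiffPi} is well-defined and Lipschitz uniformly along $[0, 1]$; equivalently, that the linear operator $I - \alpha_\tau \hat h_{\alpha_\tau, P_0, \Pi_\tau}$ on $\mS^n$ is invertible for all $\tau \in [0, 1]$. I would establish this via a Neumann series, for which it suffices to show that $|\alpha_\tau| \cdot \|\hat h_{\alpha_\tau, P_0, \Pi_\tau}\|_{\mathrm{op}} < 1$ in the Frobenius operator norm. The key structural fact is that the $\Pi$-dependence of $\hat h$ passes entirely through $U = e^{\alpha(\Pi P_0 - P_0 \Pi)}$; since $P_0 \Pi - \Pi P_0$ is skew-symmetric, $U$ is orthogonal. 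Consequently $M_U$ and $M_{U'}$ act as Frobenius isometries, $UP_1U'$ shares the spectrum of $P_1$, and the eigenvalues of $Q = P_0^{-1/2} U P_1 U' P_0^{-1/2}$ are bounded independently of $\Pi$. The useful algebraic identity
\begin{align*}
(Q + \tau I)^{-1} = P_0^{1/2} (U P_1 U' + \tau P_0)^{-1} P_0^{1/2}
\end{align*}
absorbs the outer $P_0^{-1/2}$ factors in \eqref{eq:hath} into the resolvent integral defining $M_Q^{-1}$. Combined with the spectral inequality $\lambda_{\min}(UP_1U' + \tau P_0) \geq \lambda_{\min}(P_1) + \tau \lambda_{\min}(P_0)$, the resolvent integral $\int_0^\infty (\lambda_{\min}(P_1) + \tau \lambda_{\min}(P_0))^{-2} d\tau = (\lambda_{\min}(P_0)\lambda_{\min}(P_1))^{-1}$, the commutator bound $\|\Delta P_0 - P_0 \Delta\|_{\rm F} \leq \|P_0\|_\ma \|\Delta\|_{\rm F}$, and $\|P_1 U'\|_{\mathrm{op}} = \lambda_{\max}(P_1)$, this yields
\begin{align*}
\|\hat h_{\alpha, P_0, \Pi}\|_{\mathrm{op}} \leq \frac{\|P_0\|_\ma \lambda_{\max}(P_1)}{\lambda_{\min}(P_0) \lambda_{\min}(P_1)},
\end{align*}
whose reciprocal is precisely the first threshold in \eqref{eq:alphaBound}.

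The second threshold, involving $\|P_1\|_\ma$ and $\lambda_{\max}(P_0)$, arises by the symmetric estimate in which the commutator is transferred onto $P_1$: the cleanest route is to observe that any $\Pi$ solving $h_{\alpha, P_0}(\Pi) = P_1$ corresponds, via time reversal of the underlying covariance path $P_t^{\mathrm{wls}}$, to some $\tilde \Pi$ solving $h_{\alpha, P_1}(\tilde \Pi) = P_0$, so the first threshold applied to the reversed problem delivers the second threshold for the original, and taking the maximum yields \eqref{eq:alphaBound}. The main obstacle I anticipate is the careful bookkeeping behind the resolvent identity that cancels the spurious $\lambda_{\max}(P_0)/\lambda_{\min}(P_0)$ factor one would pick up by separately bounding $\|P_0^{-1/2}\|^2$ and $\|M_Q^{-1}\|_{\mathrm{op}}$; without this consolidation the resulting bound on $\|\hat h\|_{\mathrm{op}}$ is too loose to recover \eqref{eq:alphaBound} exactly. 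Once the uniform operator norm bound is in place, the right-hand side of \eqref{eq:DiffPi} is globally Lipschitz on $[0, 1]$ with constants depending only on $P_0$, $P_1$ and $\alpha$, so Picard--Lindel\"of delivers the unique global solution $\Pi_\tau$ and the proof is complete.
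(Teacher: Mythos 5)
Your proposal is correct and follows essentially the same route as the paper: a continuation in $\alpha$ from the unique Fisher--Rao solution $\Pi_0$ at $\alpha=0$, invertibility of $I-\alpha_\tau \hat h_{\alpha_\tau,P_0,\Pi_\tau}$ via the operator-norm bound $\|\hat h\|\leq \|P_0\|_\ma\lambda_{\max}(P_1)/(\lambda_{\min}(P_0)\lambda_{\min}(P_1))$ obtained by exactly the resolvent consolidation $P_0^{-1/2}(Q+t_1 I)^{-1}P_0^{-1/2}=(UP_1U'+t_1P_0)^{-1}$ that the paper uses, and the time-reversed path from $P_1$ to $P_0$ for the second threshold. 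The only differences are presentational (you invoke Picard--Lindel\"of and the Neumann series explicitly), and your uniqueness step is stated at the same level of detail as the paper's.
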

\begin{proof}
Following \eqref{eq:DiffPi}, we will prove that $I-\alpha_\tau \hat h_{\alpha_\tau, P_0, \Pi_\tau}$ is invertible if \eqref{eq:alphaBound} holds. It is sufficient to prove that the singular values of the symmetric mapping $\hat h_{\alpha_\tau, P_0, \Pi_\tau}$ are all smaller than 1. For this purpose, we will compute the norm of $h_{\alpha_\tau, P_0, \Pi_\tau}(\Delta_\Pi)$ defined by \eqref{eq:hath}. The norm of the first two terms containing $\Delta_\Pi$ is equal to
\begin{align*}
&\frac12\bigg | \| P_0^{-\tfrac12}M_{Q}^{-1}\bigg(P_0^{-\tfrac12}M_{U}(\Delta_\Pi P_0-P_0\Delta_\Pi)P_1 U' P_0^{-\tfrac12}\bigg)P_0^{-\tfrac12}\|\\
=& \frac12\| \int_0^\infty \int_0^1 P_0^{-\tfrac12} (Q+t_1 I)^{-1} P_0^{-\tfrac12}U^{1-t_2}(\Delta_\Pi P_0-P_0\Delta_\Pi)U^{t_2}P_1 U' P_0^{-\tfrac12}(Q+t_1 I)^{-1} P_0^{-\tfrac12}dt_1dt_2\|\\
\leq &\frac12\int_0^\infty  \|(P_0^{\tfrac12}QP_0^{\tfrac12} +t_1 P_0)^{-1}\|^2 dt_1 \lambda_{\rm max}(P_1) \|P_0\|_\ma \|\Delta_\Pi\|\\
\leq &\, \frac12\int_0^\infty (\lambda_{\rm min}(P_1)+t_1 \lambda_{\min}(P_0))^{-2} dt_1 \lambda_{\rm max}(P_1) \|P_0\|_\ma \|\Delta_\Pi\|\\
\leq &\,\frac12\frac{\| P_0\|_\ma \lambda_{\rm max}(P_1)}{\lambda_{\rm min}(P_0)\lambda_{\rm min}(P_1)}\|\Delta_\Pi\|.
\end{align*}
The same upper bound also holds for the other two terms containing $\Delta$. Combining the bounds for all the four terms lead to 
\[
\|\alpha_\tau \hat h_{\alpha_\tau, P_0, \Pi_\tau}\|\leq \alpha \frac{\|P_0\|_\ma \lambda_{\rm max}(P_1)}{\lambda_{\rm min}(P_0)\lambda_{\rm min}(P_1)}.
\]
Therefore, if 
\begin{align}\label{eq:alpha1}
|\alpha| <  \frac{\lambda_{\rm min}(P_0) \lambda_{\rm min}(P_1)}{\| P_0\|_\ma \lambda_{\rm max}(P_1)},
\end{align}
then $I-\alpha_\tau \hat h_{\alpha_\tau, P_0, \Pi_\tau}$ is invertible for all $\tau\in [0, 1]$. Thus, we have proved the first term on the r.h.s. of \eqref{eq:alphaBound}. 

The second term can be obtained in a similar way by considering a time-reversal path from $P_1$ to $P_0$. Specifically, if
\begin{align}\label{eq:alpha2}
|\alpha| <  \frac{\lambda_{\rm min}(P_0) \lambda_{\rm min}(P_1)}{\| P_1\|_\ma \lambda_{\rm max}(P_0)},
\end{align}
then \eqref{eq:alpha1} implies that there exist a unique $\Pi_1\in \mS^n$ such that the following time-reversal path
\begin{align}\label{eq:Preverse}
 P_{\epsilon, (1-t)}=T_{\epsilon, t}( A_1)P_1 T_{\epsilon, t}( A_1)',
\end{align}
satisfies $ P_{\epsilon, 0}=P_0$, where
\begin{align}\label{eq:Areverse}
A_1=-\tfrac12 (P_1\Pi_1+\Pi_1 P_1)-\tfrac12 (\Pi_1 P_1-P_1 \Pi_1).
\end{align}
Next, we follow \eqref{eq:Pit_exp} to define 
\begin{align}
\Pi_{\epsilon, (1-t)}=e^{(1+\epsilon) (A_1)_\ma t}e^{(- (A_1)_\ms+\epsilon  (A_1)_\ma')t}\Pi_1 e^{(- (A_1)_\ms+\epsilon (A_1)_\ma)t}e^{(1+\epsilon) (A_1)_\ma t},
\end{align}
where $(A_1)_\ms$ and $(A_1)_\ma$ denote the symmetric and asymmetric part of $A_1$, respectively. 
Let $\Pi_0=-\Pi_{\epsilon, (1-t)}$. Then $\Pi_0$ satisfies the conditions in Proposition \ref{thm_wls}. Specifically, the path from \eqref{eq:Preverse} satisfies the time-forward equation
$P_{\epsilon, t}=T_{\epsilon, t}( A_0)P_0 T_{\epsilon, t}( A_0)'$ with $A_0$ given by \eqref{eq:minMS}. Therefore, the proof is complete.
\end{proof}

\subsection{On the computation of local solutions}
If $|\alpha|$ is large so that $I-\alpha_\tau \hat h_{\alpha_\tau, P_0, \Pi_\tau}$ from \eqref{eq:DiffPi} is not invertible for some $\tau \in [0, 1]$, then there may exist multiple solutions to \eqref{eq:halpha}. In below, we propose an approach to compute local solutions \eqref{eq:halpha} based on an approximate initial value.

Specifically, we consider $\hat \Pi_0$ as an initial guess for the solution to \eqref{eq:halpha}. 
We assume that the true endpoint $P_1$ is close to $\hat P_1=h_{\alpha, P_0}(\hat \Pi_0)$. 
By applying perturbation analysis, we obtain that if the pair $\hat \Pi_0+\Delta_\Pi$ and $\hat P_1+\Delta_P$ satisfy \eqref{eq:halpha} then the perturbations should satisfy 
\begin{align*}
\Delta_\Pi-\alpha \hat h_{\alpha, P_0, \hat \Pi}(\Delta_\Pi)+o(\|\Delta_\Pi\|)=-\frac12 P_0^{-\tfrac12}M_{Q}^{-1}(P_0^{-\tfrac12}U \Delta_P U' P_0^{-\frac12}) P_0^{-\tfrac12} +o(\|\Delta_P\|).
\end{align*}
Next, we define a path $P_\tau=(1-\tau) \hat P_1+\tau P_1$ for $\tau \in [0, 1]$.
Then, $P_\tau$ remains in $\mS^n_{++}$ and $\dot P_\tau= P_1-\hat P_1$. 
If $I-\alpha \hat  h_{\alpha, P_0, \hat \Pi}$ is invertible, then the solution to the following differential equation
\begin{align*}
\frac{d}{d\tau} \hat \Pi_\tau=(I-\alpha \hat  h_{\alpha, P_0, \hat \Pi})^{-1}\left(-\frac12 P_0^{-\tfrac12}M_{Q}^{-1}(P_0^{-\tfrac12}U (P_1-\hat P_1) U' P_0^{-\frac12}) P_0^{-\tfrac12}) \right),
\end{align*}
at $\tau=1$ with the initial value given by $\hat \Pi_0$ provides a solution to \eqref{eq:halpha}. 
The solution may depend on the choice of the initial value $\hat \Pi_0$ as illustrated by the following example.

\section{Examples}\label{sec:example}
\subsection{Interpolating covariance matrices}\label{sec:ExampleA}
In this example, we highlight the difference between $P_{\epsilon,t}^{\rm wls}$ and the other two types of trajectories, i.e. $P_{t}^{\rm omt}$, $P_{t}^{\rm info}$, using the following two matrices as the endpoints
\begin{align}\label{eq:P0P1}
P_0=\left[\begin{matrix}1& 0\\ 0& 2 \end{matrix} \right], P_1=\left[\begin{matrix}2& 0\\ 0& 1 \end{matrix} \right].
\end{align}
Applying Eqs. \eqref{eq:Ptinfo} and \eqref{eq:Ptomt} we obtain that
\begin{align*}
P_{t}^{\rm omt}&=\left[\begin{matrix}(1+(\sqrt{2}-1)t)^2& 0\\ 0& (\sqrt{2}+(1-\sqrt{2})t)^2 \end{matrix} \right],\\
P_{t}^{\rm info}&=\left[\begin{matrix}2^t& 0\\ 0& 2^{1-t} \end{matrix} \right],
\end{align*}
which are all diagonal.
On the other hand, if $\epsilon=0$ in \eqref{eq:minMS}, there are infinitely many asymmetric matrices $A_{0}^{\rm wls}$ of the following form 
\[
A_{0}^{\rm wls}=\left[\begin{matrix} 0& \pm \tfrac{(2k+1)\pi}{2}\\ \mp \tfrac{(2k+1)\pi}{2}& 0 \end{matrix} \right],
\]
that makes the objective function equal to zero.
The corresponding covariance paths are equal to
\begin{align*}
P_{0,t}^{\rm wls}=&\left[\begin{matrix}1+\sin^2(\tfrac{(2k+1)\pi}{2}t)& \pm \cos(\tfrac{(2k+1)\pi}{2}t)\sin(\tfrac{(2k+1)\pi}{2}t)\\ \pm \cos(\tfrac{(2k+1)\pi}{2}t)\sin(\tfrac{(2k+1)\pi}{2}t)& 1+\cos^2(\tfrac{(2k+1)\pi}{2}t) \end{matrix} \right],
\end{align*}
which are not diagonal. 

To understand the covariance paths corresponding to nonzero $\epsilon$, we gradually increase $\epsilon$ from $0.001$ to $0.2$ and numerically compute the solution to \eqref{eq:halpha} by using the \emph{fmincon} function in MATLAB$\textsuperscript{\textregistered}$ to search for a symmetric matrix $\Pi$ that minimizes the least-square error $\|h_{\frac{1+\epsilon}{2\epsilon},P_0}(\Pi)-P_1 \|_{\rm F}$.
In this procedure, we apply the minimizer corresponding to a smaller $\epsilon$ as the initial value of the next step with a larger $\epsilon$. In the first step when $\epsilon=0.001$, we choose two initial values for $\hat \Pi$ as 
\[
\hat \Pi_\pm=\pm \left[\begin{matrix} 0& \pm \pi \\ \pm \pi& 0 \end{matrix} \right]\times 10^{-3},
\]
respectively, so that the initial system matrices from \eqref{eq:minMS} are approximately assymetric. 
As $\epsilon$ increases, we obtain two branches of numerical minimizers whose residuals are around $10^{-9}$. 

Figure \ref{fig:Example1Fig1} illustrates the trajectories of the entries of $P_{\epsilon,t}^{\rm wls}$ corresponding to three positive values of $\epsilon$. 
The dashed and solid red lines illustrate the off-diagonal entry of the two branches of minimizers.
As $\epsilon$ increases, the magnitude of the off-diagonal entry reduces. 
Fig. \ref{fig:Example1Fig2} illustrates the off-diagonal entry $A_\ma(1,2)$ of the two branches of local minimizers at different $\epsilon$. The two branches collapse into a unique one when $\epsilon$ passes a threshold value around $0.13$.

\begin{figure*}[htb]
\centering
\includegraphics[width=.3\textwidth]{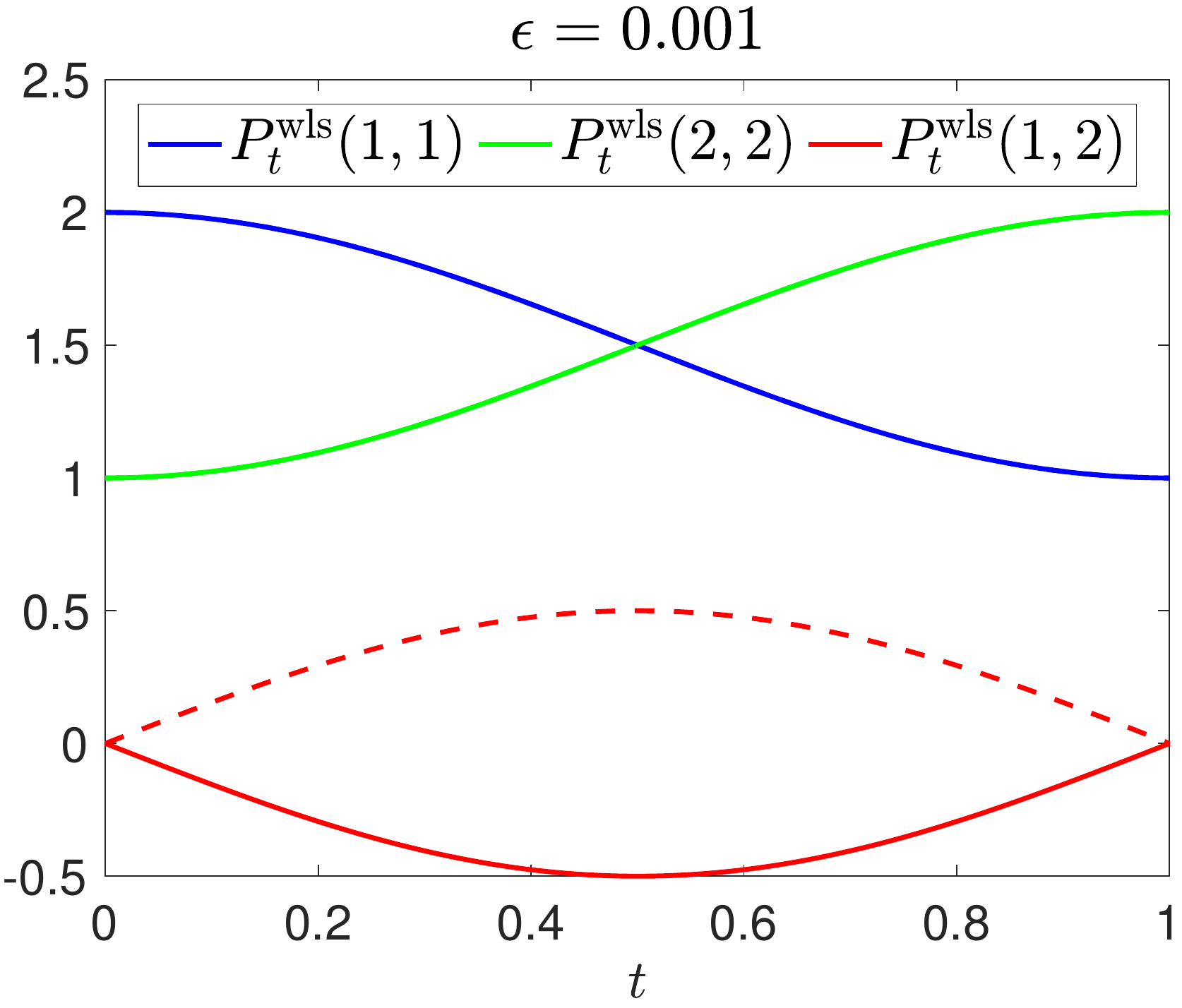}
\includegraphics[width=.3\textwidth]{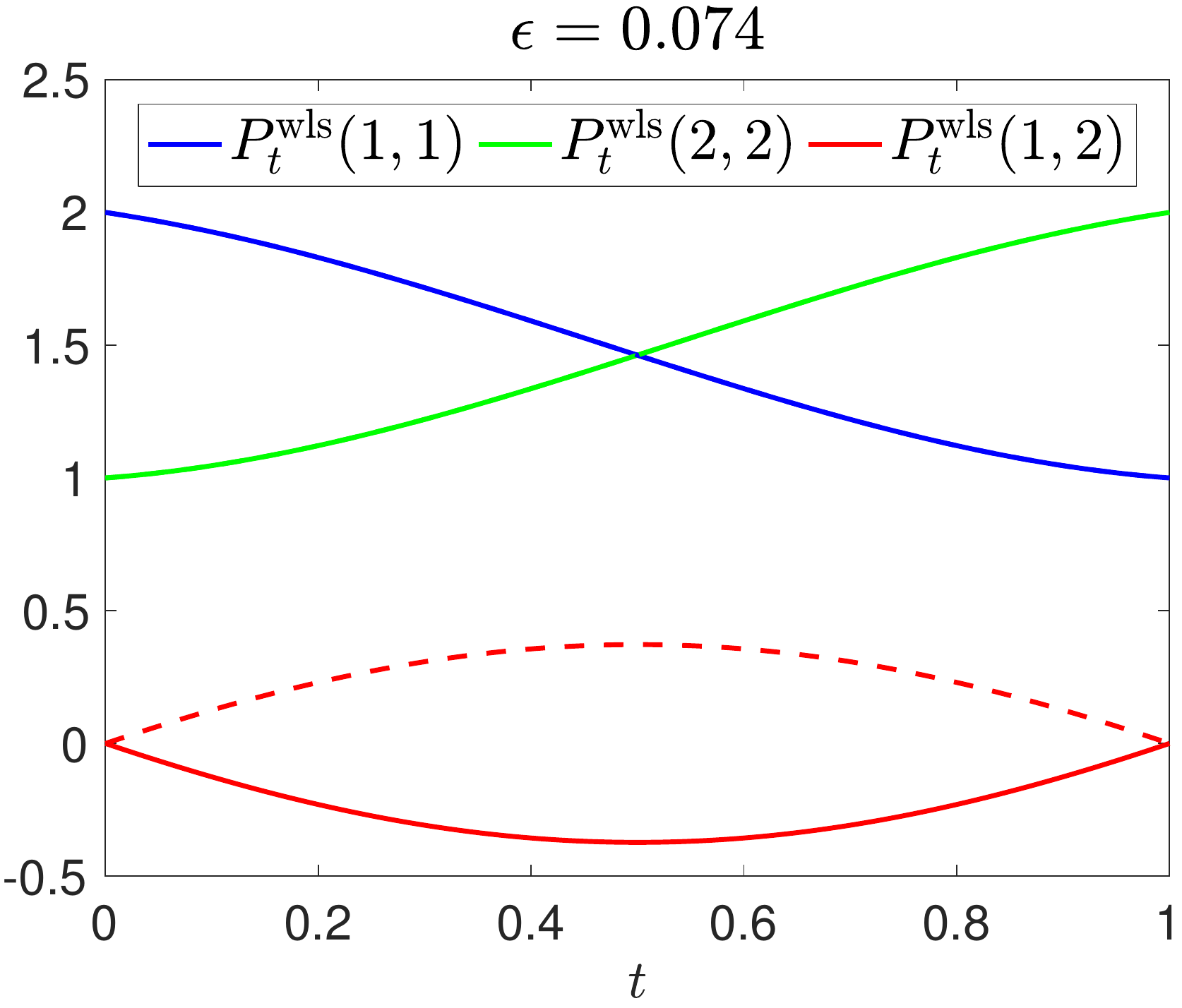}
\includegraphics[width=.3\textwidth]{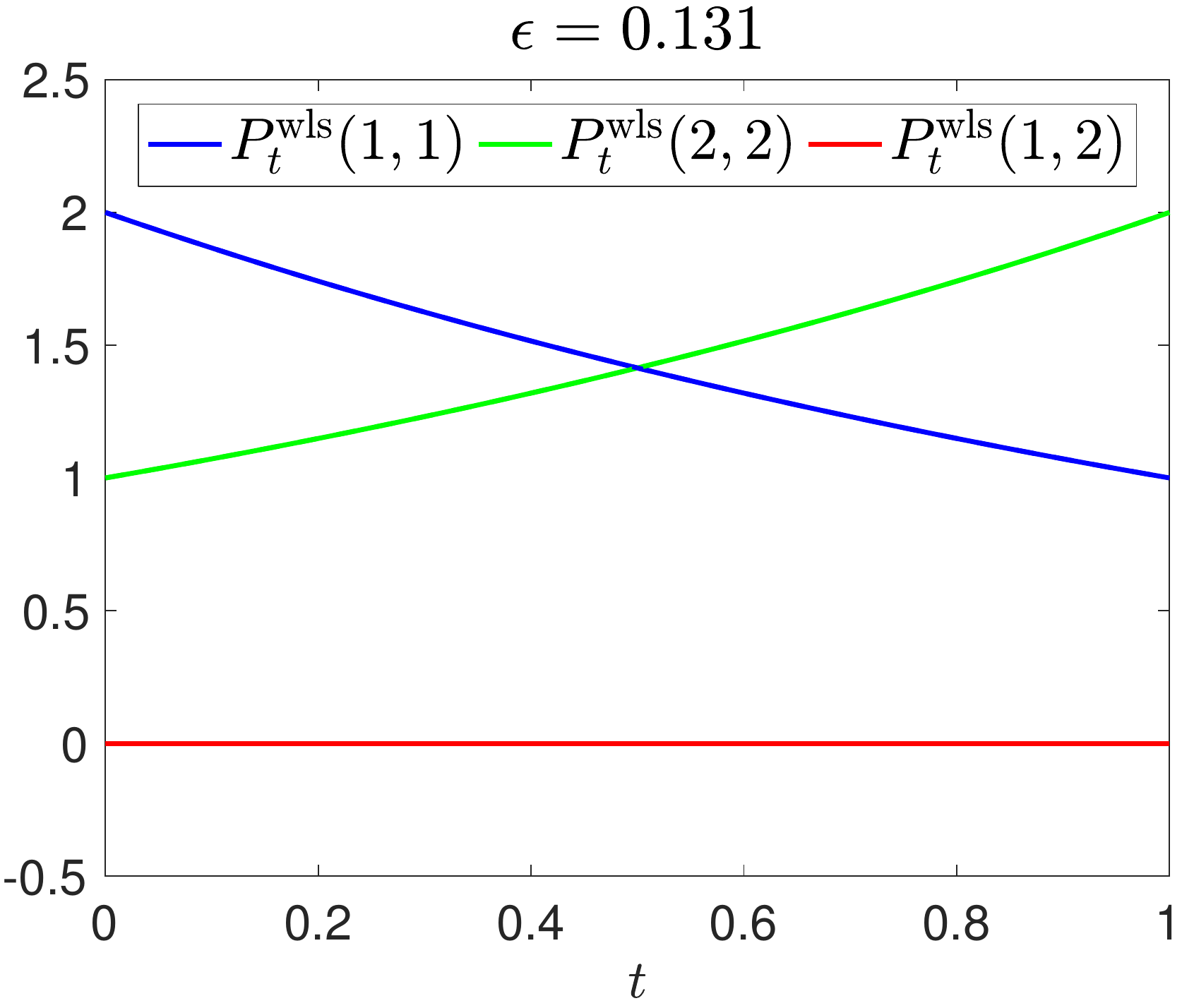}
\caption{\small{An illustration of covariance paths $P_{\epsilon,t}^{\rm wls}$ connecting $P_0$ and $P_1$ in \eqref{eq:P0P1} at three different  values for $\epsilon$.}}\label{fig:Example1Fig1}
\end{figure*}

\begin{figure*}[htb]
\centering
\includegraphics[width=.4\textwidth]{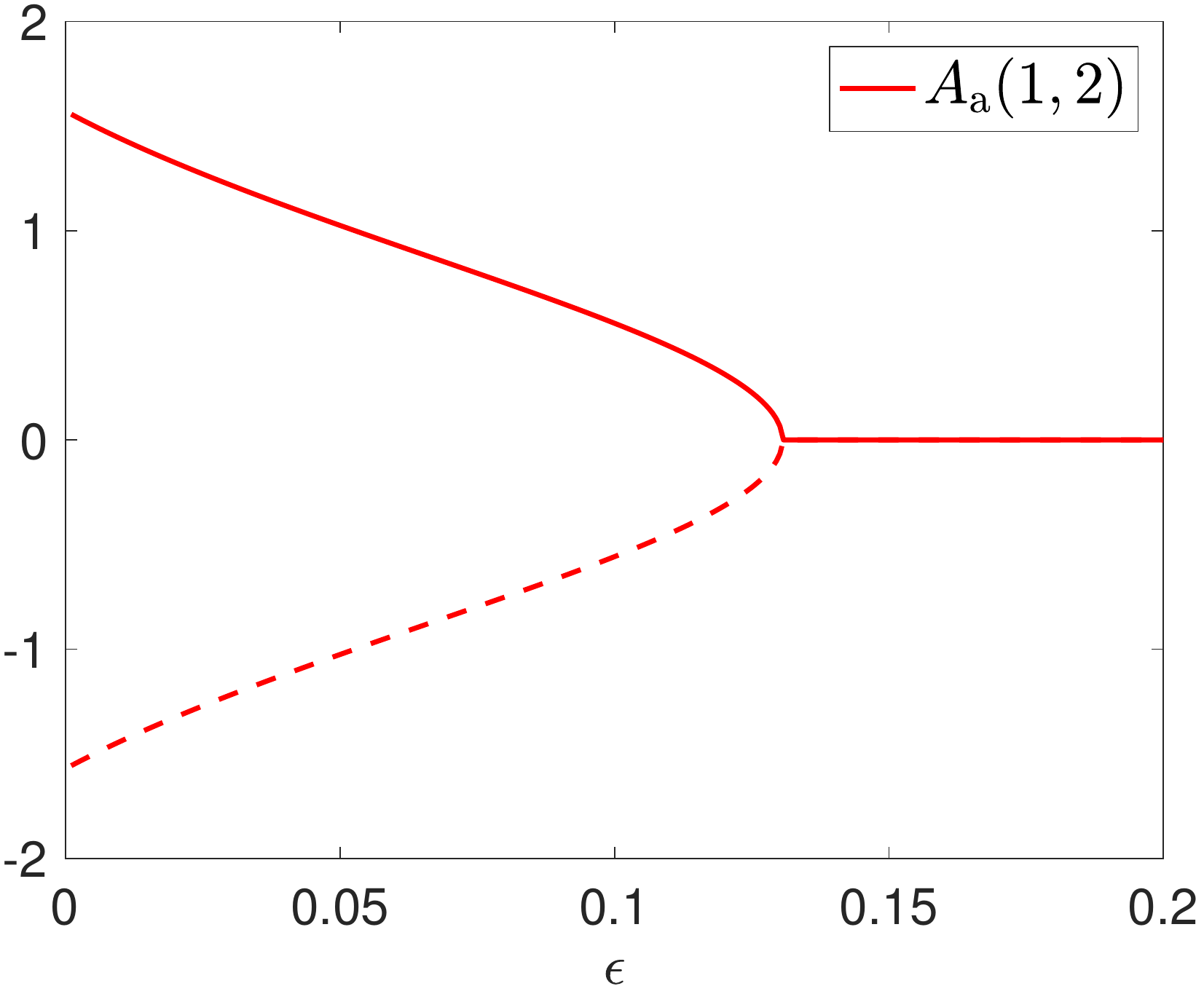}
\caption{\small{An illustration of the off-diagonal entry $A_\ma(1,2)$ of two branches of local minimizers at different $\epsilon$.}}\label{fig:Example1Fig2}
\end{figure*}

\subsection{Regularization of sample covariances}
We investigate an application to use the proposed covariance paths to fit noisy sample covariance matrices from a rsfMRI dataset.
Below we provide detailed descriptions about the data, the method and experimental results.

\subsubsection{Data}
The sample covariances matrices are computed using a rsfMRI data set from the Human Connectome Project \cite{VANESSEN2013}. 
This dataset consists of 1200 rsfMRI image volumes measured in a 15-minute time window. 
The provided data has already been processed by the ICA-FIX method \cite{SMITH2013144}.
It is further processed using global signal regression (GSR) as suggested in \cite{Fox2009}. 
Then we apply the label map from \cite{Yeo2011} to separate brain cortical surface into 7 non-overlapping regions. 
The data sequences from each region are averaged into a one-dimensional time series, providing a 7-dimensional time series sampled at 1200 time points.
The same dataset and preprocessing method have been used in our early work \cite{NingCausality}.
Next, we normalize each dimension of the time series by its standard devision. 
The normalized time series is denoted by $\{\bx_t, t=1, \ldots, 1200 \}$.
Moreover, we split the entire sequence into 10 equal-length segments and compute the corresponding sample covariance matrices as
\[
\tilde P_k=\frac{1}{120} \sum_{i=1}^{120} \bx_{120\times k+i}\bx_{120\times k+i}', \mbox{ for } k=0, \ldots, 9.
\]
Then the time-scale is changed so that $\tilde P_{t_k}$ is equal to $\tilde P_{k}$ with $t_0=0$ and $t_9=1$.
The color arrays in the first row of Fig. \ref{fig:Example2Fig1} illustrate several representative $\tilde P_{t_k}$ at $t=0, \tfrac13, \tfrac23, 1$, respectively.
These figures show that $\tilde P_{t_k}$ has significant fluctuations which is consistent to the observations from \cite{CHANG201081,PRETI201741}.
The main goal of this proof-of-concept experiment is to use the proposed covariance paths to fit these sample covariances and compare their differences. 
The neuroscience aspects of this experiment will not be discussed in this paper.

\subsubsection{Method}

We solve optimization problems of the following form
\begin{align}\label{eq:minPt}
\min_{P_t\in \cP} \sum_{k=0}^K \|P_{t_k}-\tilde P_{t_k}\|_{\rm F}^2,
\end{align}
to obtain smooth paths that fit the measurements, where $K=9$ and $\cP$ represents a suitable set of smooth paths. 
Based on results from the previous sections, we propose three sets of parametric models for the smooth paths which are described in below.

Based on Proposition \ref{thm:transp}, we define 
\begin{align*}
\cP_{\rm omt}:=\bigg\{P_t \mid& P_t=(I-t Q)P_0(I-tQ'), \nonumber\\
&P_0\in \mS^{n}_{++}, Q\in \mR^{n\times n}\bigg\}.
\end{align*}
Note that $Q$ could be a non-symmetric matrix so that $\cP_{\rm omt}$ contains the OMT-based geodesics in the form of \eqref{eq:Ptomt}. 
We use this more general family of covariance paths in order to obtain better fitting results.
It is also clear from Proposition \ref{thm:transp} that a $P_t\cP_{\rm omt}$ is the state covariance of a linear time varying system with $A_t=-Q(I-Qt)^{-1}$.
We apply the \emph{fminsdp} function\footnote{This package is available from \url{https://www.mathworks.com/matlabcentral/fileexchange/43643-fminsdp}.} in MATLAB$\textsuperscript{\textregistered}$ to obtain an optimal solution. The initial values for $P_0$ and $M$ are set to $\tilde P_0$ and the zero matrix, respectively. The same initial values and optimization algorithm are used to solve the subsequent optimization problems. The corresponding optimal value is denoted by $\hat P_t^{\rm omt}$.

The second set of smooth paths is defined based on Proposition \ref{thm:info1} which is given by
\[
\cP_{\rm info}:=\left\{P_t \mid P_t=e^{At}P_0e^{A't}, P_0\in \mS^{n}_{++}, A\in \mR^{n\times n}\right\}.
\]
$\cP_{\rm info}$ includes all geodesic paths in the form of $P_t^{\rm info}$. 
The optimal path in this set is denoted by $\hat P_t^{\rm info}$. 
Clearly, a trajectory in $P_t\in\cP_{\rm info}$ is equal to the state covariance of a linear time-invariant system.

Based on Proposition \ref{thm_wls}, we define
\begin{align*}
\cP_{\epsilon,{\rm wls}}:=\bigg\{P_t \mid& P_t=T_{\epsilon,t}(A)P_0T_{\epsilon,t}(A)', \\
&P_0\in \mS^{n}_{++}, A\in \mR^{n\times n}\bigg\},
\end{align*}
for a given $\epsilon>0$. The corresponding optimal paths are denoted by $\hat P_{\epsilon,t}^{\rm wls}$. 
This set includes all the trajectories that are solutions of \eqref{eq:minA_eps}.
A trajectory in $\cP_{\epsilon,{\rm wls}}$ is equal to the state covariance of a linear time-varying system with the system matrices expressed in the form $e^{(1+\epsilon)  A_\ma t} A e^{(1+\epsilon)A_\ma' t}$. The system matrices corresponding to $\hat P_{\epsilon,t}^{\rm wls}$ is denoted by $\hat A_{\epsilon, t}^{\rm wls}$.
The parameter $\epsilon$ is then searched over a discrete set in $[0,\, 100]$ to minimize fitting errors. Based on the fitting results, we set the value of $\epsilon$ at 20.

\subsubsection{Results}

Figure \ref{fig:Example2Fig2} illustrates the fitting results of 6 representative entries of $\tilde P_{t_k}$. 
The black stars represent the noisy measurements. 
The blue, green, and red plots represent the estimated paths $\hat P_t^{\rm omt}, \hat P_t^{\rm info}$ and $\hat P_{\epsilon,t}^{\rm wls}$, respectively. 
$\hat P_t^{\rm omt}$ and $\hat P_t^{\rm info}$ are very similar with each other. 
Clearly, $\hat P_{\epsilon,t}^{\rm wls}$ has more oscillations which better fits the fluctuations in the measurements. 
The normalized square errors $\left(\sum_{k=0}^K \|\hat P_{t_k}-\tilde P_{t_k}\|_{\rm F}^2\right) /\left(\sum_k^K  \|\tilde P_{t_k}\|_{\rm F}^2\right)$ corresponding to $\hat P_t^{\rm omt}, \hat P_t^{\rm info}, \hat P_{\epsilon,t}^{\rm wls}$ are equal to $0.1683, 0.1671, 0.1543$, respectively. 
Thus, $\hat P_{\epsilon,t}^{\rm wls}$ has the smallest fitting error. 
The overall relative large residual is partly due to the low-signal-to-noise ratio of fMRI data \cite{MURPHY2007565}.
Therefore, the corresponding system matrices $\hat A_{\epsilon, t}^{\rm wls}$ could better explain the dynamic interdependence between brain regions. 
The directed networks in the second row of Fig. \ref{fig:Example2Fig1} illustrates the matrices $\hat A_{\epsilon, t}^{\rm wls}$ at $t=0, \tfrac13, \tfrac23, 1$, respectively. 
The red and blue colors represent positive and negative values, respectively.
The edge widths are weighted by the absolute value of the corresponding entries. 
To simplify visualization, edges with weight smaller than 0.15 are not displayed.

\begin{figure*}[htb]
\centering
\includegraphics[width=1\textwidth]{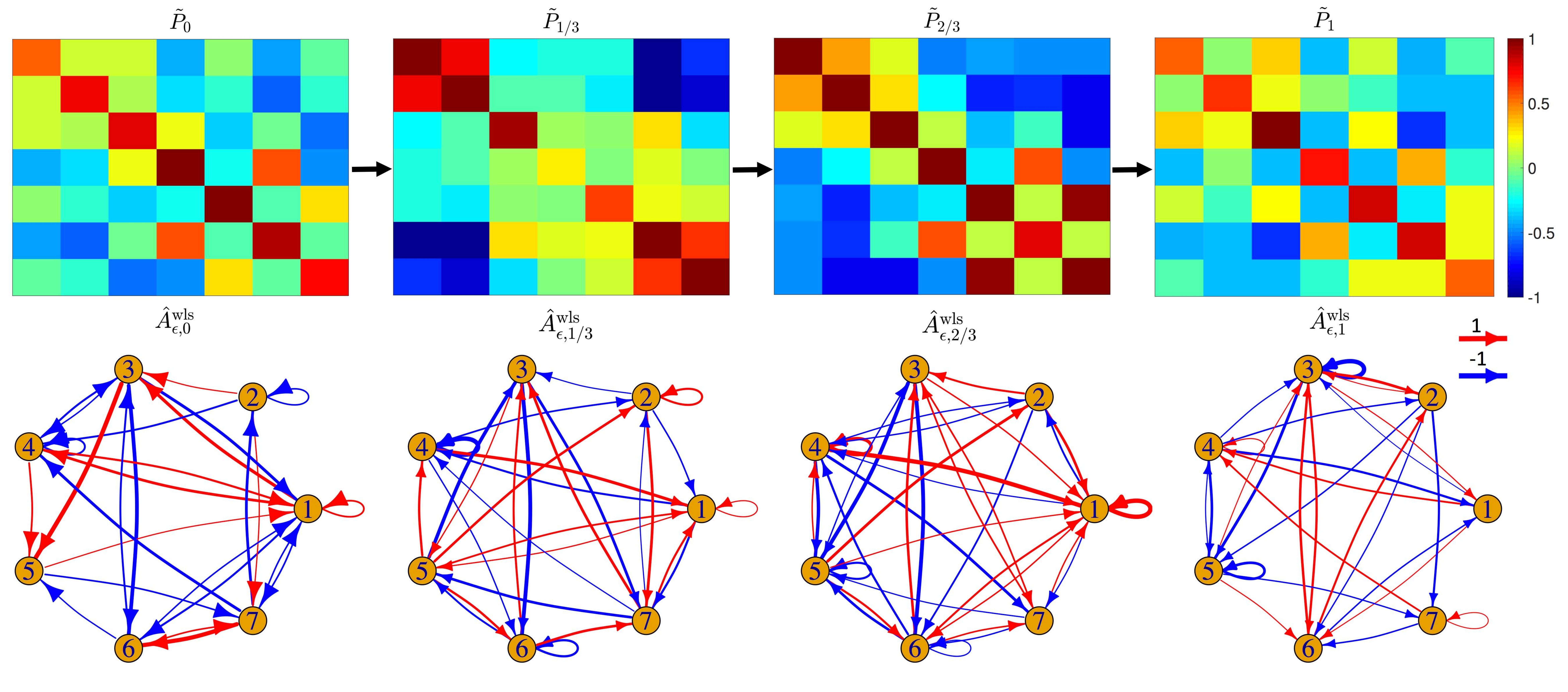}
\caption{\small{The first row illustrates the sample covariance matrices between 7 brain regions computed form different segments of a rsfMRI data set from a human brain. The directed networks in the second row illustrate the estimated system matrices corresponding to the proposed weighted-least-squares trajectories.}}\label{fig:Example2Fig1}
\end{figure*}

\begin{figure*}[htb]
\centering
\includegraphics[width=.7\textwidth]{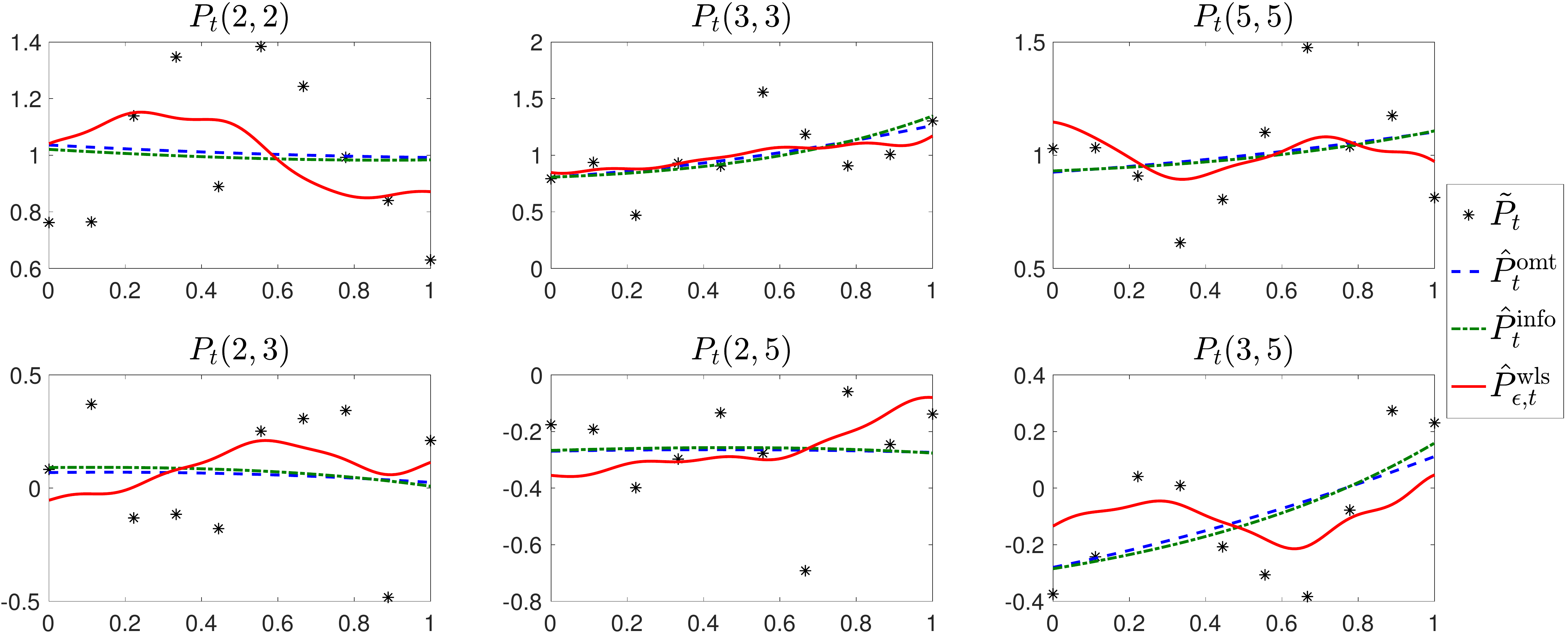}
\caption{\small{The black stars in each image panel illustrate the noisy sample covariances matrices at different time points. The blue, green and red lines are the fitted curves using the proposed three sets of smooth paths.}}\label{fig:Example2Fig2}
\end{figure*}

\section{Discussion}\label{sec:discussion}
In this paper, we have investigated a framework to derive covariance paths on the Riemannian manifold of positive definite matrices by using quadratic forms of system matrices to regularize the path lengths.
We have considered three types of quadratic forms and derived the corresponding covariance paths.
The first and the second quadratic forms lead to the well-known geodesics derived from the Bures-Wasserstein metric from OMT and the Fisher-Rao metric from information geometry, respectively.
In the process, we have derived a fluid-mechanics interpretation of the Fisher-Rao metric in Theorem \ref{thm:InfoOMT}, which provides an interesting weighted-mass-transport view for the Fisher-Rao metric.

The third type of quadric form gives rise to a general family of covariance paths that are steered by system matrices with a rotating eigenspace. 
The rotation velocity is related to the choice of the parameter $\epsilon$.
In the special case when $\epsilon=-1$, i.e. $\alpha=0$, then the eigenspace is not rotating and the trajectories reduce to the Fisher-Rao based geodesics. 
We also analyzed the existence and uniqueness of the paths with sufficiently small $\alpha$.

We note that similar types of trajectories of positive definite matrices with rotating eigenspaces have been investigated in \cite{NingMatrixOMT,Yamamoto2017,ChenOMT} from different angles. This work is developed along similar lines as \cite{Chen2016,Chen2016b}, which focus on the optimal steering of state covariances via linear systems using external input. 
But the approach for developing covariance paths used in this paper is different from early work.

In a proof-of-concept example, we apply three types of smooth paths of state covariance to fit noisy sample covariance matrices from a rsfMRI data set.
A goal of this experiment is to understand directed interactions among brain regions via the estimated system matrices.
As expected, the rotation-system-based covariance path has the best performance in terms of fitting fluctuations in the measurements. 
Therefore, the corresponding system matrices could provide a data-driven tool to understand the structured fluctuations of functional brain activities.
In future work, we will apply this approach to analyze more complex brain networks using different path fitting algorithms. 
Moreover, we will also explore the proposed covariance paths to analyze data from other neuroimaging modalities such as EEG/MEG.   

\section*{Acknowledgment}

{\small
The author would like to thank Tryphon T. Georgiou and Yogesh Rathi for insightful discussions.

This work was supported in part under grants R21MH115280 (PI: Ning), R01MH097979 (PI: Rathi), R01MH111917 (PI: Rathi), R01MH074794 (PI: Westin).

\bibliographystyle{IEEEtran}
\bibliography{IEEEabrv,CovMat}

\end{document}